\documentclass[a4paper, 11pt]{article}

\usepackage{amsmath,amsthm,amssymb,bbm,enumitem}
\usepackage[margin=2cm]{geometry}
\usepackage{tikz, tkz-graph, mathrsfs}

\theoremstyle{plain}
\newtheorem{theorem}{Theorem}[section]

\newtheorem{conjecture}[theorem]{Conjecture}
\newtheorem{corollary}[theorem]{Corollary}

\newtheorem{lemma}[theorem]{Lemma}

\newtheorem{proposition}[theorem]{Proposition}

\theoremstyle{definition}
\newtheorem{claim}{Claim}
\newtheorem*{claim*}{Claim}

\usetikzlibrary{arrows}

\newcommand{\edge}[2]{\draw (#1) -- (#2);}
\newcommand{\arc}[2]{{\draw[-latex] (#1) edge (#2);}}

\newcommand{\Proba}{\mathbb{P}}

\newcommand{\neighbourhood}{N}
\newcommand{\degree}{d}

\newcommand{\dMax}{\degree^{\max}}

\newcommand{\loops}[1]{#1^\circ}

\newcommand{\feedback}{\tau}

\newcommand{\Fix}{\mathrm{Fix}}

\newcommand{\functions}{\mathrm{F}}

\newcommand{\stability}{\mathrm{s}}

\newcommand{\instability}{\mathrm{i}}


\newcommand{\dH}{d_\mathrm{H}}




\newcommand{\one}[1]{{\mathbbm 1}\left\{ #1 \right\}}

\begin{document}

\title{On the stability and instability of finite dynamical systems with prescribed interaction graphs}
\author{Maximilien Gadouleau\footnote{Department of Computer Science, Durham University, South Road, Durham DH1 3LE, UK. Email: m.r.gadouleau@durham.ac.uk}}
\maketitle

\begin{abstract}
The dynamical properties of finite dynamical systems (FDSs) have been investigated in the context of coding theoretic problems, such as network coding, and in the context of hat games, such as the guessing game and Winkler's hat game. The instability of an FDS is the minimum Hamming distance between a state and its image under the FDS, while the stability is the minimum of the reciprocal of the Hamming distance; they are both directly related to Winkler's hat game. In this paper, we study the value of the (in)stability of FDSs with prescribed interaction graphs. The first main contribution of this paper is the study of the maximum stability for interaction graphs with a loop on each vertex. We determine the maximum (in)stability for large enough alphabets and also prove some lower bounds for the Boolean alphabet. We also compare the maximum stability for arbitrary functions compared to monotone functions only. The second main contribution of the paper is the study of the average (in)stability of FDSs with a given interaction graph. We show that the average stability tends to zero with high alphabets, and we then investigate the average instability. In that study, we give bounds on the number of FDSs with positive instability (i.e fixed point free functions). We then conjecture that all non-acyclic graphs will have an average instability which does not tend to zero when the alphabet is large. We prove this conjecture for some classes of graphs, including cycles.
\end{abstract}

\section{Introduction} \label{sec:intro}

Many entities (such as genes, neurons, persons, computers, etc.) organise themselves as complex networks, where each entity has a finitely valued state and a function which updates the value of the state. Since entities influence each other, this local update function depends on the states of some of the entities. Such a network is called a {\bf Finite Dynamical System} (FDS), with special cases or variants appearing under different names, such as Boolean Networks \cite{Kau69, Tho73}, Boolean Automata Networks \cite{NRS13}, Multi-Valued Networks \cite{BS12}, etc. The main problem when studying an FDS is to determine its dynamics, such as the number of its fixed points, or how the trajectory of a state depends on the initial state.  

FDSs have been used to represent a network of interacting entities as follows. A network of $n$ entities has a state $x= (x_1,\dots, x_n) \in [q]^n$, represented by a variable $x_v$ taking its value in a finite alphabet $[q] = \{0, 1, \dots, q-1\}$ on each entity $v$. The state then evolves according to a deterministic function $f = (f_1,\dots,f_n) : [q]^n \to [q]^n$, where $f_v : [q]^n \to [q]$ represents the update of the local state $x_v$. Although different update schedules have been studied, we are focusing on the parallel update schedule, where all entities update their state at the same time, and $x$ becomes $f(x)$. FDSs have been used to model different real-life networks of entities, such as gene regulatory networks, neural networks, social interactions, etc. (see \cite{Gad} and references therein).

The structure of an FDS $f: A^n \to A^n$ can be represented via its \textbf{interaction graph} $G(f)$, which indicates which update functions depend on which variables. More formally, $G(f)$ has $\{1,\dots,n\}$ as vertex set and there is an arc from $u$ to $v$ if $f_v(x)$ depends on $x_u$. In different contexts, the interaction graph is known (or at least well approximated), while the actual update functions are not. One main problem of research on FDSs is then to predict their dynamics according to their interaction graphs.

Hat games are an increasingly popular topic in combinatorics. Typically, a hat game involves $n$ players, each wearing a hat that can take a colour from a given set of $q$ colours. No player can see their own hat, but each player can see some subset of the other hats. All players are asked to guess the colour of their own hat at the same time. For an extensive review of different hat games, see \cite{Krz12}.

In \textbf{Winkler's hat game}, the team scores as many points as players guessing correctly. The aim is then to construct a guessing function $f$ which guarantees a score for any possible configuration of hats \cite{Win01}. Winkler's hat game was studied in \cite{BHKL08, GG15, Gad, Szc17, Far}. This hat game, and a dual version where the players aim to guess the colour incorrectly, can be formalised in terms of the stability and the instability of FDSs with prescribed interaction graphs \cite{Gad}.

In the variation called the ``\textbf{guessing game}'' \cite{Rii07}, the team wins if everyone has guessed their colour correctly; the aim is to maximise the number of hat assignments which are correctly guessed by all players. This version of the hat game then aims to determine the so-called guessing number of a digraph. Both hat games are linked in \cite{Gad}. The guessing number is itself related to the network coding solvability problem \cite{Rii06, Rii07, Rii07a}.

Previous work on the (in)stability of FDSs were actually concerned with functions whose interaction graphs are contained in a given digraph. In this paper, we focus on ``strict'' (in)stability, where the interaction graph of the FDS is exactly a given digraph $D$. This paper has two main contributions.
\begin{itemize}
\item Firstly, we study the maximum (in)stability of an FDS with a prescribed interaction graph. We focus on graphs with a loop on each vertex, as this is where the non-strict case is trivial. The stability is more interesting; we give its limit for large alphabets and we give a tight lower bound for the Boolean case. We finally focus on the maximum stability of monotone Boolean FDSs.

\item Secondly, we study the average (in)stability  of an FDS with a prescribed interaction graph. This time, the average stability is relatively easy to handle, while the instability is more interesting. We relate the latter problem to the number of fixed-point free functions with a prescribed interaction graph. We conjecture that the number of fixed-point free elements does not tend to zero for large alphabets, unless the interaction graph is acyclic. We finally prove a stronger version of that conjecture for cycles. 
\end{itemize}

The rest of the paper is as follows. Section \ref{sec:stability_definition} review some background on graphs and FDSs and introduces the strict (in)stability of digraphs. Section \ref{sec:max_s} gives our results on the maximum (in)stability of FDSs with a given loop-full interaction graph. Section \ref{sec:average_i} then investigates the average (in)stability of FDSs with a given loop-full interaction graph.

\section{Stability and instability: definitions and basic properties} \label{sec:stability_definition}

\subsection{Finite dynamical systems}

Let $n$ be a positive integer and $V = \{1, \dots, n\}$. A graph $D$ on $V$ is a pair $D = (V,E)$ with $E \subseteq V^2$ (in other words, all our graphs are directed). Paths and cycles are always directed. The girth of $D$ is the minimum length of a cycle in $D$. A feedback vertex set is a set of vertices $I$ such that $D-I$ has no cycles. The minimum size of a feedback vertex set is denoted $\feedback(D)$. If $J \subseteq V$, then $D[J]$ is the subgraph of $D$ induced by $J$. If this graph is acyclic, then the vertices of $J$ can be sorted in acyclic ordering: $J = \{j_1, \dots, j_k\}$ where $(j_a, j_b) \in E$ only if $a < b$. The in-neighbourhood of a vertex $v$ in $D$ is denoted by $\neighbourhood(v;D)$ and its in-degree is denoted by $\degree(v;D)$; we may simply write $\neighbourhood(v)$ and $\degree(v)$ if the graph is clear from the context. A vertex with empty in-neighbourhood is a source of $D$. We denote the maximum in-degree of $D$ as $\dMax(D)$.

A loop is an arc of the form $(v,v)$. We view a loop as a cycle of length one. If $D$ has no loops, we say it is loopless; if $D$ has a loop on every vertex, we say it is loop-full.  If $D$ is a loopless graph, then $\loops{D}$ is the loop-full graph obtained by taking $D$ and adding a loop to very vertex. An edge $uv$ is a pair of arcs $\{(u,v), (v,u)\}$ for $u \ne v$; a graph is simple if its arc set can be partitioned into edges (i.e. the graph is loopless and symmetric).

Let $q \ge 2$, we denote $[q]=\{0,1,\dots,q-1\}$. For all $x = (x_1, \dots, x_n) \in [q]^n$, we use the following shorthand notation for all $J = \{j_1, \dots, j_k\} \subseteq V$: $x_J = (x_{j_1}, \dots, x_{j_k})$. For all $x,y\in[q]^n$ we set $\Delta(x,y):=\{i\in [n]\,:\,x_i\ne y_i\}$. The Hamming distance between $x$ and $y$ is $\dH(x,y)=|\Delta(x,y)|$. Finally, for any property $\mathcal{P}$, we denote the function which returns $1$ if $\mathcal{P}$ is satisfied and $0$ otherwise by $\one{\mathcal{P}}$. For instance, $\dH(x, y) = \sum_{v=1}^n \one{x_v \ne y_v}$.

Let $f : [q]^n \to [q]^n$ be a \textbf{Finite Dynamical System} (FDS). We write the FDS as $f = (f_1, \dots, f_n)$ where $f_v : [q]^n \to [q]$ is a local function of $f$.  We also use the shorthand notation $f_J : [q]^n \to [q]^{|J|}$, $f_J = (f_{j_1}, \dots, f_{j_k})$. We associate with $f$ the graph $G(f)$, referred to as the \textbf{interaction graph} of $f$, defined by: the vertex set is $V$; and for all $u,v\in V$, there exists an arc $(u,v)$ if and only if $f_v$ depends essentially on $x_u$, i.e. there exist $x,y\in [q]^n$ that only differ by $x_u\neq y_u$ such that $f_v(x)\neq f_v(y)$. For a graph $D$, we denote by $\functions(D,q)$ the set of FDSs $f : [q]^n \to [q]^n$ with $G(f) \subseteq D$ and by $\functions[D,q]$ the set of FDSs $f : [q]^n \to [q]^n$ with $G(f) = D$. We also denote the set of all FDSs $f : [q]^n \to [q]^n$ as $\functions(n,q)$. When dealing with $f \in \functions(n,q)$ we shall implicitly do all operations modulo $q$.

The \textbf{stability} and \textbf{instability} of an FDS $f$ are respectively given by \cite{Gad}
\begin{align*}
	\stability(f) &:= \min_{x\in[q]^n} \left\{ n - \dH(x,f(x)) \right\},\\
	\instability(f) &:=\min_{x\in[q]^n} \dH(x,f(x)).
\end{align*}
Then $\stability(f) = n$ if and only if $f$ is the identity, while $\instability(f) = 0$ if and only if $f$ has a fixed point. By definition, we have $\stability(f) + \instability(f) \le n$. It is easy to prove that the minimum (in)stability in $\functions[D,q]$ is equal to zero for all $D$ and all $q$.

\subsection{Stability and instability: general properties} \label{sec:instability_general}

The \textbf{$q$-stability} and \textbf{$q$-instability} of a graph $D$ are \cite{Gad}
\begin{align*}
	\stability(D,q) &:= \max_{f \in \functions(D,q)} \stability(f),\\
	\instability(D,q) &:= \max_{f \in \functions(D,q)} \instability(f).
\end{align*}

We give a hat game intuition for these quantities. Consider a game played by a team of $n$ players, each having a hat that can take $q$ possible colours. Each player can only see a subset of the hats; let $D$ be the graph on $\{1, \dots, n\}$ such that $(u,v)$ is an arc if and only if $v$ can see $u$'s hat. Before the game starts, the players come up with a common strategy for guessing their own hat's colour; no communication is allowed once the colours have been chosen. Let $x \in [q]^n$ represent a configuration of hats. The team's guess is then denoted $f(x)$, where $f_v$ depends on $x_u$ only if $(u,v) \in D$, or in other words $G(f) \subseteq D$. Therefore, exactly $n - \dH(x, f(x))$ players will guess correctly if the hat configuration is $x$. If the aim of the game is to find a guessing strategy that maximises the number of correct guesses for any possible configuration of hats, then the maximum is given by $\stability(D,q)$. If on the other hand, the aim is to maximise the number of incorrect guesses, then the solution is $\instability(D,q)$.

Similarly, we denote the strict $q$-stability and the strict $q$-instability of $D$ as
\begin{align*}
	\stability[D,q] &:= \max_{f \in \functions[D,q]} \stability(f),\\
	\instability[D,q] &:= \max_{f \in \functions[D,q]} \instability(f).
\end{align*}
The hat game intuition here is not so straightforward, as we force each player to be influenced by every hat that they see at least once. For the stability, this makes a big difference when there is a loop on a vertex $v$. This case corresponds to player $v$ being able to see their own hat, hence obviously the optimal strategy is $f_v(x) = x_v$. However, in the strict case, if $(u,v) \in D$, then at some point $x_u$ has to influence $f_v$, which forces $v$ to guess incorrectly. The game is now about minimising the negative influence of other players. We shall study this more in detail in the next section.

Some basic properties of the (in)stability are given below.

\begin{lemma} \label{lem:i_increasing}
For every loopless graph $D$ and any $q \ge 2$, the following hold.
$$
	\stability[D,2] = \instability[D,2] \le \instability[D,q] \le \instability(D,q) \le \instability[D,2q].
$$
\end{lemma}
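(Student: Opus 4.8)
The plan is to establish the chain of four (in)equalities one link at a time, since each has a different flavour. For the first equality $\stability[D,2] = \instability[D,2]$, I would use the fact that over the Boolean alphabet, guessing correctly and guessing incorrectly are complementary: given $f \in \functions[D,2]$, consider its ``complement'' $\bar f$ defined by $\bar f_v(x) = 1 - f_v(x)$. Since complementing the output of $f_v$ does not change which variables $f_v$ depends on essentially, we have $G(\bar f) = G(f) = D$, so $\bar f \in \functions[D,2]$. Moreover $\dH(x, \bar f(x)) = n - \dH(x, f(x))$ for every $x$, hence $\instability(\bar f) = \stability(f)$ and vice versa. Taking maxima over $\functions[D,2]$ gives the equality.

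For the middle inequality $\instability[D,q] \le \instability[D,2q]$ is the one I expect to be the main obstacle; let me first dispatch the easier $\instability[D,q] \le \instability(D,q)$, which is immediate from $\functions[D,q] \subseteq \functions(D,q)$ — maximising instability over a larger set can only increase it. The genuinely interesting step is $\instability[D,2] \le \instability[D,q]$ (and, in the same spirit, $\instability[D,q] \le \instability[D,2q]$). For the former, I would take an optimal $f \in \functions[D,2]$ with $\instability(f) = \instability[D,2]$ and lift it to an FDS $g$ over $[q]$: define $g_v(x) = f_v(x \bmod 2)$, where $x \bmod 2$ reduces each coordinate mod $2$. Then for any $x \in [q]^n$, $\dH(x, g(x)) \ge \dH(x \bmod 2, f(x \bmod 2)) \ge \instability(f)$, since $x_v \bmod 2 \ne g_v(x) = f_v(x \bmod 2)$ forces $x_v \ne g_v(x)$. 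The subtlety is that $g$ may not have interaction graph exactly $D$: reducing mod $2$ could destroy an essential dependency. I would fix this by perturbing $g$ on a sparse set of inputs — for each arc $(u,v) \in D$, since $f_v$ depends essentially on $x_u$ over $[2]$, one can choose a witness and re-inject a dependency of $g_v$ on $x_u$ at a single additional input (or a few), chosen far (in Hamming distance) from the configurations realising the minimum, so that the instability is not decreased. This repair argument is the crux.

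For the final inequality $\instability(D,q) \le \instability[D,2q]$, I would take an optimal $f \in \functions(D,q)$, so $G(f) \subseteq D$ and $\instability(f) = \instability(D,q)$, and embed it into an FDS over $[2q]$ that realises the full interaction graph $D$. The idea is to write $[2q] = [q] \times [2]$ via $y \mapsto (y \bmod q, \lfloor y/q \rfloor)$, run $f$ on the ``$[q]$-part'', and use the extra ``$[2]$-part'' as free space in which to force the missing arcs of $D$: for each arc $(u,v) \in D \setminus G(f)$, make the high bit of $g_v$ depend on the high bit (or some coordinate) of $x_u$, while keeping the low part equal to $f_v$ of the low parts. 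A careful bookkeeping of which coordinates are altered shows $G(g) = D$ exactly. For the instability, note that if $x \in [2q]^n$ and $x'$ is its low part in $[q]^n$, then whenever $x'_v \ne f_v(x')$ we also get $x_v \ne g_v(x)$ (the low parts already differ), so $\dH(x, g(x)) \ge \dH(x', f(x')) \ge \instability(f)$; hence $\instability(g) \ge \instability(f)$. Combining the four links yields the lemma. The only delicate points are the two ``repair'' steps — ensuring that re-introducing the required essential dependencies can always be done without lowering the instability — and I would handle both by localising the modifications to inputs chosen to avoid the (finitely many) configurations that achieve the minimum Hamming distance.
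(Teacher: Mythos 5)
Your proposal is correct, and for the one link the paper proves in detail, $\instability(D,q) \le \instability[D,2q]$, your construction is essentially the paper's: the paper also identifies $[2q]$ with $[q]\times[2]$, runs an optimal $f \in \functions(D,q)$ on the $[q]$-part, and forces the arcs of $D$ through the $[2]$-part, taking the second component of $g_v$ to be $\bigwedge_{u \in \neighbourhood(v)} x_u^2$ (forcing every arc at once rather than only the missing ones, which removes the bookkeeping you allude to). The difference lies in the first two relations, which the paper dispatches by citing \cite{Gad} as ``implicitly proved'' there, whereas you give self-contained arguments: the complementation $\bar f_v = 1 - f_v$ for $\stability[D,2] = \instability[D,2]$ is correct (complementation preserves essential dependencies and sends $\dH(x,f(x))$ to $n - \dH(x,f(x))$), and the mod-$2$ lift $g_v(x) = f_v(x \bmod 2)$ handles $\instability[D,2] \le \instability[D,q]$. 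One point you should note: the ``repair'' step you single out as the crux is not needed at all. The lift cannot destroy an essential dependency, since any witness pair $a,b \in [2]^n$ differing only in coordinate $u$ with $f_v(a) \ne f_v(b)$ also lies in $[q]^n$ and witnesses dependence of $g_v$ on $x_u$; and it clearly creates no new dependencies, so $G(g) = D$ exactly. Together with $\Delta(x \bmod 2, f(x \bmod 2)) \subseteq \Delta(x, g(x))$, which you already proved, this gives $\instability(g) \ge \instability(f)$ with no perturbation, and the same argument shows $\instability[D,q]$ is non-decreasing in $q$, a fact the paper uses later.
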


\begin{proof}
The relations $\stability[D,2] = \instability[D,2] \le \instability[D,q]$ were implicitly proved in \cite{Gad}. Moreover, $\instability[D,q] \le \instability(D,q)$ by definition. We now prove the last inequality. We view an element $y$ of $[2q]$ as $(y^1, y^2) \in [q] \times [2]$ and we let $f \in \functions(D,q)$ with maximum instability. We construct $g \in \functions[D,2q]$ as follows:
$$
	g_v \left( \left( x^1, x^2 \right) \right) = \left( f_v(x^1) , \bigwedge_{u \in \neighbourhood(v)} x_u^2 \right).
$$
Then it is clear that $\instability(g) \ge \instability(f)$.
\end{proof}

In \cite{Gad}, it is proved that for any loopless $D$ and any $q \ge 2$, the instability $\instability(D,q)$ is a non-decreasing function of $q$; the proof actually shows that the instability $\instability[D,q]$ is non-decreasing. It is also proved in \cite{Gad} that the stability $\stability(D, q)$ is a non-increasing function of $q$; however, it is unknown whether the strict stability is also non-decreasing.

\begin{corollary}
For all $q$ sufficiently large, $\instability[D,q] = \feedback(D)$.
\end{corollary}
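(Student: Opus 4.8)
The plan is to derive the corollary from Lemma~\ref{lem:i_increasing} together with the known behaviour of the \emph{non-strict} instability (here $D$ is loopless, as in the lemma). Recall the standard upper bound $\instability(D,q) \le \feedback(D)$, valid for every $q$: if $I$ is a minimum feedback vertex set of $D$ and $f \in \functions(D,q)$, then $D-I$ is acyclic, so listing $V \setminus I = \{j_1,\dots,j_k\}$ in acyclic order and setting $x_I$ arbitrarily, one can successively read off $x_{j_a} = f_{j_a}(x)$, since $f_{j_a}$ depends only on coordinates in $I \cup \{j_1,\dots,j_{a-1}\}$. The resulting $x$ satisfies $\Delta(x,f(x)) \subseteq I$, hence $\instability(f) \le |I| = \feedback(D)$. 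In particular $\instability[D,q] \le \instability(D,q) \le \feedback(D)$ for all $q$, which is the easy half of the equality.

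For the matching lower bound I would invoke the result of \cite{Gad} that the non-strict instability attains its bound for large alphabets, i.e. $\instability(D,q) = \feedback(D)$ once $q \ge q_0$ for some $q_0$ (alternatively: $\instability(D,q)$ is non-decreasing in $q$, integer-valued, bounded above by $\feedback(D)$, and tends to $\feedback(D)$, so it is eventually equal to $\feedback(D)$). Then, for $q \ge q_0$, the last inequality of Lemma~\ref{lem:i_increasing} gives
$$
\feedback(D) = \instability(D,q) \le \instability[D,2q] \le \instability(D,2q) \le \feedback(D),
$$
so $\instability[D,2q] = \feedback(D)$. Finally, using that $q \mapsto \instability[D,q]$ is non-decreasing (as recalled in the excerpt from \cite{Gad}), for every $q' \ge 2q_0$ we get $\feedback(D) = \instability[D,2q_0] \le \instability[D,q'] \le \feedback(D)$, hence $\instability[D,q'] = \feedback(D)$, which proves the corollary.

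There is no substantial obstacle here: the only input beyond the excerpt is the classical fact that the non-strict instability reaches $\feedback(D)$ for large $q$, and the rest is bookkeeping. The one point that requires a little care is that the ``doubling'' inequality $\instability(D,q) \le \instability[D,2q]$ only directly controls the strict instability at the values $2q$, so one genuinely needs the monotonicity of $\instability[D,\cdot]$ to conclude for all sufficiently large $q$ rather than only for even alphabets.
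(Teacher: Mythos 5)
Your proof is correct and follows essentially the same route as the paper: cite from \cite{Gad} that $\instability(D,q)=\feedback(D)$ for large $q$, apply the doubling inequality $\instability(D,q)\le\instability[D,2q]$ of Lemma~\ref{lem:i_increasing}, and use the monotonicity of $\instability[D,\cdot]$ together with the upper bound $\instability[D,q]\le\instability(D,q)\le\feedback(D)$ to conclude for all sufficiently large alphabets. The only (harmless) difference is that you spell out the acyclic-ordering argument for the upper bound, which the paper takes for granted.
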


\begin{proof}
According to \cite{Gad}, $\instability(D,q) = \feedback(D)$ for all $q \ge 2^n$. Therefore, for all $q \ge 2^{n+1}$, Lemma \ref{lem:i_increasing} and the fact that the $q$-strict instability is non decreasing yield 
$$
	\feedback(D) = \instability(D, \lfloor q/2 \rfloor) \le \instability[D, 2\lfloor q/2 \rfloor] \le \instability[D,q] \le \feedback(D).
$$
\end{proof}

Moreover, considering the function $f \in \functions[K_n, q]$ defined as
$f_v(x) = v - \sum_{u \ne v} x_u $
and combining with \cite[Proposition 3]{Gad}, we easily obtain that for every $n$ and $q\ge 2$, 
\begin{align*}
	\stability[K_n, q]  = \stability(K_n, q) &= \left\lfloor \frac{n}{q} \right\rfloor,\\
	\instability[K_n,q] = \instability(K_n, q)  &= n - \left\lceil \frac{n}{q} \right\rceil.
\end{align*}
These are the highest values of the (in)stability for all loopless graphs.

\section{Strict (in)stability for loop-full graphs} \label{sec:max_s}

\subsection{Exact results}

We now consider the strict (in)stability of loop-full graphs. Let $D$ be a loopless graph and $\loops{D}$ be obtained by adding a loop on every vertex. It is easily seen that the strict $q$-(in)stability of $\loops{D}$ is a non-decreasing function of $q$. Then let $S$ be the set of sources of $D$ and define
$$
	\sigma(D) := \max \left\{ |U| : U \subseteq V \setminus S, \neighbourhood(u; \loops{D}) \cap \neighbourhood(v; \loops{D}) = \emptyset \,\, \forall \, u, v \in U, u \ne v \right\}.
$$
This can be viewed as the independence number of a related simple graph $G = (V, E')$, where $u$ and $v$ are adjacent if and only if either $(u,v) \in E$ or $(v,u) \in E$ or there exists $w \in \neighbourhood(u; D) \cap \neighbourhood(v; D) $.

We first determine the values of the strict (in)stability for large enough alphabets.

\begin{proposition} \label{prop:s_loops}
For any loopless graph $D$, 
\begin{itemize}	
	\item $\stability[\loops{D}, q] = n - \sigma(D)$ for all $q \ge n - |S|$;
	
	\item $\instability[\loops{D},q] = n$ for all $q \ge 3$;
\end{itemize}
\end{proposition}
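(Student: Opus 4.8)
The plan is to treat the two statements separately, starting with the (short) instability one. For $\instability[\loops{D},q]=n$ it suffices to exhibit $f\in\functions[\loops{D},q]$ with $f_v(x)\ne x_v$ for all $v$ and all $x$, since then $\dH(x,f(x))=n$ for every $x$, so $\instability(f)=n$, while $\instability(g)\le n$ holds trivially for any $g$. I would take
\[
	f_v(x)\;=\;x_v+1+\prod_{u\in\neighbourhood(v;D)}\one{x_u=0}\pmod q .
\]
As $q\ge 3$, the increment $1+\prod_{u}\one{x_u=0}\in\{1,2\}$ is never $0$ modulo $q$, so $f_v(x)\ne x_v$ always. Moreover $f_v$ depends essentially on $x_v$ (via the summand $x_v$), on each $x_u$ with $u\in\neighbourhood(v;D)$ (set the remaining in-neighbours to $0$ and flip $x_u$ between $0$ and $1$), and on nothing else, so $G(f)=\loops{D}$; hence $\instability[\loops{D},q]=n$. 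The bound $q\ge 3$ is exactly what makes the ``error'' avoid $0$.

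For the stability I would prove $\stability[\loops{D},q]\le n-\sigma(D)$ and $\stability[\loops{D},q]\ge n-\sigma(D)$ separately, the former for all $q\ge 2$. For the upper bound, fix any $f\in\functions[\loops{D},q]$ and let $U\subseteq V\setminus S$ attain $\sigma(D)$, so the closed in-neighbourhoods $\neighbourhood(v;\loops{D})$, $v\in U$, are pairwise disjoint. Since $G(f)=\loops{D}$ exactly and each $v\in U$ is a non-source, $f_v$ depends essentially on some in-neighbour $u\ne v$; in particular $f_v$ is not the projection $x\mapsto x_v$, so there is $x^{(v)}$ with $f_v(x^{(v)})\ne x^{(v)}_v$. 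Using the disjointness, glue the $x^{(v)}$ into a single state $x^{\ast}$ agreeing with $x^{(v)}$ on $\neighbourhood(v;\loops{D})$ for every $v\in U$; since $v\in\neighbourhood(v;\loops{D})$ (the loop), $f_v(x^{\ast})=f_v(x^{(v)})\ne x^{(v)}_v=x^{\ast}_v$ for all $v\in U$, whence $\dH(x^{\ast},f(x^{\ast}))\ge|U|=\sigma(D)$ and $\stability(f)\le n-\sigma(D)$.

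For the lower bound I would construct an $f$ whose error set $W(x):=\{v:f_v(x)\ne x_v\}$ is, for every $x$, a set of non-sources that is independent in the simple graph $G$ introduced just before the proposition; as $\sigma(D)$ is exactly the largest size of such a set, this gives $\dH(x,f(x))=|W(x)|\le\sigma(D)$. Put $f_s(x)=x_s$ for every source $s$, so $W(x)\subseteq V\setminus S$. Since $q\ge n-|S|$, the $n-|S|$ non-sources admit pairwise distinct labels $c(v)\in[q]$; writing $P_v:=\{c(v'):(v,v')\in E\}$, a set of size at most $n-|S|-1<q$ (as $D$ is loopless), define for each non-source $v$
\[
	f_v(x)\;=\;x_v+\one{x_v\notin P_v}\cdot\prod_{w\in\neighbourhood(v;D)}\one{x_w=c(v)}\pmod q .
\]
Then: (i) $G(f)=\loops{D}$, since $f_v$ reads only $x_v$ and $x_{\neighbourhood(v;D)}$, depends on $x_v$ (fix every in-neighbour to a value $\ne c(v)$, where $f_v=x_v$), and depends on each in-neighbour $x_w$ (fix $x_v\notin P_v$ and all in-neighbours to $c(v)$, then flip $x_w$); (ii) sources never err; and (iii) no two $G$-adjacent non-sources $u,v$ lie in $W(x)$ at once. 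For (iii), by definition of $G$ we may assume, up to swapping, that either some $w\in\neighbourhood(u;D)\cap\neighbourhood(v;D)$, in which case $u,v\in W(x)$ would force $x_w=c(u)$ and $x_w=c(v)$, contradicting distinctness of the labels; or $(u,v)\in E$, in which case $v\in W(x)$ forces $x_u=c(v)$ while $u\in W(x)$ forces $x_u\notin P_u$, and $c(v)\in P_u$. Hence $W(x)$ is always independent in $G$ and $\stability(f)\ge n-\sigma(D)$.

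The step I expect to require the most care is this last construction: the error sets must be squeezed into an independent set of $G$ while every local function still witnesses essential dependence on all of its in-neighbours (including its own loop), and it is precisely the hypothesis $q\ge n-|S|$ that supplies enough distinct labels $c(v)$ to reconcile the two demands. One should in particular check the extreme case $q=n-|S|$, where some $P_v$ may have $q-1$ elements and leave only one value of $x_v$ at which $v$ can err; the essential dependence of $f_v$ on $x_v$ is nonetheless witnessed at the (many) states where $f_v=x_v$, so no separate argument is needed.
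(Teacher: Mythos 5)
Your proof is correct and takes essentially the same approach as the paper: the same gluing argument over pairwise disjoint closed in-neighbourhoods for the upper bound $\stability[\loops{D},q]\le n-\sigma(D)$, the same distinct-labels construction (exploiting $q\ge n-|S|$) making every error set independent in the auxiliary simple graph for the lower bound, and the same ``increment by $1$ or $2$ modulo $q$'' trick for $\instability[\loops{D},q]=n$. The only deviations are cosmetic: the paper's indicator for a non-source $v$ also requires $x_v$ to equal $v$'s own label (via the loop), which replaces your $x_v\notin P_v$ factor and its extra case analysis, and the paper treats sources separately in the instability construction rather than via an empty product.
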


\begin{proof}
All the neighbourhoods are with respect to $\loops{D}$. Let $\sigma = \sigma(D)$ and $U = \{u_1, \dots, u_\sigma\} \subseteq V \setminus S$, such that $\neighbourhood(u_i) \cap \neighbourhood(u_j) = \emptyset$ for all $1 \le i, j \le \sigma$. For any $1 \le i \le \sigma$, there exists $y_{\neighbourhood(u_i)}$ such that 
$$
	f_{u_i}(y_{\neighbourhood(u_i)}) \ne y_{u_i}.
$$
Therefore, the state $y = (y_{\neighbourhood(u_1)}, \dots, y_{\neighbourhood(u_\sigma)}, y_T)$, where $T$ is the rest of the vertices, satisfies $\dH(y, f(y)) \ge \sigma$. 

Conversely, without loss suppose the set of loops in $D$ is $S = \{n - |S| + 1, \dots, n\}$. Then let $q \ge n - |S|$ and consider the following function $f \in \functions[\loops{D}, q]$:
\begin{align*}
	f_s(x) &= x_s  	&& \forall s \in S,\\
	f_v(x) &= x_v + \one{ x_{\neighbourhood(v)} = (v - 1, \dots, v - 1) } && \forall v \notin S.
\end{align*}
Fix $x \in [q]^n$ and let $J = \Delta(x, f(x))$. Firstly, $J \subseteq V \setminus S$. Now, let $u,v \in J$ such that $\neighbourhood(u) \cap \neighbourhood(v) \ne \emptyset$, say $w$ is in the intersection. Then $x_w = u - 1 = v - 1$, which is impossible. Therefore, for all $u,v \in J$, $\neighbourhood(u) \cap \neighbourhood(v) = \emptyset$ and hence $\dH(x, f(x)) = |J| \le \sigma$.

For the instability, let $q \ge 3$ and $f \in \functions[\loops{D}, q]$ such that
\begin{align*}
	f_s(x) &= x_s + 1 && \forall s \in S,\\
	f_v(x) &= x_v + 1 + \one{ x_{\neighbourhood(v)} = (0, \dots, 0) } && \forall v \notin S.
\end{align*}
Then clearly $\instability(f) = n$.
\end{proof}

We remark that the condition $q \ge n - |S|$ in Proposition \ref{prop:s_loops} is tight for some graphs. Indeed, let $D$ be an out-star, i.e. $D = (V, E)$ with $E = \{ (1, v) : 2 \le v \le n \}$, then $|S| = 1$, $\sigma(D) = 1$ and $\stability[D,q] \le n - 2$ for all $q \le n-2$.

\begin{proposition}
For any graph $D$ without sources, the strict stability of $\loops{D}$ and its maximum in-degree are related by
\[
	q^{\dMax(\loops{D})} \ge \frac{n}{n-\stability[\loops{D},q]}.
\]
Conversely, for any $t \ge 1$, $\Delta \ge 1$ and $q$, there exists an infinite family of graphs with $\stability[\loops{D},q] = n - t$, $n = t q^\Delta$ and $\dMax(D) = \Delta + 1$.
\end{proposition}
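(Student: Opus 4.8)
\emph{Proof sketch.} The plan is to establish the inequality by an averaging argument and the converse by an explicit generalised out‑star construction.

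For the inequality, let $f \in \functions[\loops D, q]$ realise $\stability[\loops D, q]$ and set $t := n - \stability[\loops D, q]$, so that $\dH(x, f(x)) \le t$ for all $x \in [q]^n$. Write $p_v := q^{-n}\bigl|\{x \in [q]^n : f_v(x) \ne x_v\}\bigr|$; averaging the bound $\dH(x, f(x)) \le t$ over $x$ gives
\[
	t \;\ge\; q^{-n}\sum_{x \in [q]^n} \dH(x, f(x)) \;=\; \sum_{v = 1}^{n} p_v .
\]
The crucial step is the pointwise bound $p_v \ge q^{-\dMax(\loops D)}$. Since $D$ has no source, $v$ has an in‑neighbour $u \ne v$ in $D$, hence in $\loops D$, and $G(f) = \loops D$ forces $f_v$ to depend essentially on $x_u$; in particular $f_v$ is not the projection $x \mapsto x_v$, so there is an $x^{\ast}$ with $f_v(x^{\ast}) \ne x^{\ast}_v$. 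As $f_v$ reads only the $\degree(v;\loops D)$ coordinates in $\neighbourhood(v;\loops D)$, every $x$ agreeing with $x^{\ast}$ on those coordinates also satisfies $f_v(x) \ne x_v$, and there are $q^{\,n - \degree(v;\loops D)}$ of them; hence $p_v \ge q^{-\degree(v;\loops D)} \ge q^{-\dMax(\loops D)}$. Summing over $v$ yields $t \ge n\,q^{-\dMax(\loops D)}$ (in particular $t > 0$), which rearranges to the claimed inequality.

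For the converse, fix $\Delta$ and $q$; for each $t \ge 1$ I would construct a loopless graph $D_t$ on $n = t q^\Delta$ vertices, the family being $\{D_t\}_{t \ge 1}$. Take $D_t$ to be the disjoint union of $t$ copies of a gadget $H$ on $q^\Delta$ vertices chosen so that: $H$ is loopless and source‑free; $\dMax(H) = \Delta + 1$; the simple graph appearing in the definition of $\sigma$ is complete on $V(H)$, so $\sigma(H) = 1$; and there is $g \in \functions[\loops H, q]$ with $\dH(x, g(x)) \le 1$ for every $x$. Concretely, pick $\Delta$ ``reference'' vertices $W = \{w_1, \dots, w_\Delta\} \subseteq V(H)$, give every vertex outside $W$ the in‑neighbourhood $W$ — except that one designated vertex $z'$ also gets one extra in‑neighbour $z \in V(H) \setminus W$, so $\degree(z';H) = \Delta + 1$ — and let the $w_j$ have in‑neighbourhoods inside $W$ (for $\Delta \ge 2$, $\neighbourhood(w_j;H) = W \setminus \{w_j\}$; for $\Delta = 1$ a cosmetic adjustment is needed, and for $q = 2,\Delta = 1$ a small separate construction). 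Fix a bijection $e \colon V(H) \to [q]^\Delta$ and set $g_v(x) = x_v + \one{(x_{w_1}, \dots, x_{w_\Delta}) = e(v)}$ for $v \ne z'$, and $g_{z'}(x) = x_{z'} + \one{(x_{w_1}, \dots, x_{w_\Delta}) = e(z')}\cdot\one{x_z = 0}$. Since $\sigma$ is additive over components, $\sigma(D_t) = t$, while $\dMax(D_t) = \Delta + 1$ and $n = tq^\Delta$ by construction, and the argument in the first half of the proof of Proposition~\ref{prop:s_loops} (valid for all $q$) gives $\stability[\loops D_t, q] \le n - \sigma(D_t) = n - t$.

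For the matching lower bound, observe that in any state $x$ the tuple $(x_{w_1}, \dots, x_{w_\Delta})$ equals $e(v)$ for exactly one $v$, and only that $v$ can be incorrect under $g$, so $\dH(x, g(x)) \le 1$; thus $f := (g, \dots, g) \in \functions[\loops D_t, q]$ has $\dH(x, f(x)) \le t$ for all $x$, whence $\stability[\loops D_t, q] \ge \stability(f) \ge n - t$, and therefore $\stability[\loops D_t, q] = n - t$. The routine part is the first half; the main obstacle is checking that $G(g)$ is \emph{exactly} $\loops H$ — every in‑arc essential, including the ``bookkeeping'' arc $z \to z'$ added only to raise one in‑degree to $\Delta + 1$ — while keeping $g$ incorrect on at most one coordinate of $H$ in every state: the extra factor $\one{x_z = 0}$ is designed precisely to reconcile these two demands (the summand $x_v$ forces the loop at $v$; flipping a reference coordinate toggles the indicator, forcing $w_j \to v$; flipping $x_z$ at a state where $z'$'s indicator is on changes $g_{z'}$, forcing $z \to z'$). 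Carrying out this verification, together with the computation $\sigma(D_t) = t$, source‑freeness, and the disposal of the small exceptional cases, is where the work lies.
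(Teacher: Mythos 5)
Your proof of the first inequality is in substance identical to the paper's: the paper also produces, for each $v$, at least $q^{n-\degree(v;\loops{D})}$ states with $f_v(x)\ne x_v$ (exactly your bound $p_v\ge q^{-\degree(v;\loops{D})}$) and compares $\sum_x \dH(x,f(x))$ with $q^n\left(n-\stability[\loops{D},q]\right)$; you merely make explicit where source-freeness is used to get the witness $x^\ast$. The converse is where you genuinely diverge. The paper takes a single graph on $n=tq^\Delta$ vertices whose arcs all leave a fixed $\Delta$-set $A$ towards every other vertex, splits $V$ into $t$ clusters of size $q^\Delta$, and uses $f_{c_{i,j}}(x)=x_{c_{i,j}}+\one{x_A=z_j}$, so each cluster has exactly one incorrect coordinate; it only verifies this particular $f$, i.e.\ the direction $\stability[\loops{D},q]\ge n-t$ (for that graph $\sigma(D)=1$, so no matching upper bound can come from Proposition \ref{prop:s_loops}, and note $\dMax(D)=\Delta$ there, with $\Delta+1$ attained only by $\loops{D}$). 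Your disjoint-gadget construction buys two things the paper's does not: the upper bound $\stability[\loops{D_t},q]\le n-\sigma(D_t)=n-t$ drops out of the first half of Proposition \ref{prop:s_loops} since $\sigma$ is additive over components, and $\dMax(D_t)=\Delta+1$ holds literally for the loopless graph, as the statement reads. The price is the bookkeeping arc $z\to z'$ with the factor $\one{x_z=0}$ (whose essentiality checks do go through) and the deferred small cases: the $\Delta=1$ adjustment is indeed cosmetic, but $(q,\Delta)=(2,1)$ is a genuine exception --- a loopless two-vertex gadget cannot have in-degree $2$ --- so the promised separate construction must actually be supplied (e.g.\ $t$ disjoint $2$-cycles with both local functions of the form $x_u\lor x_v$, plus one extra arc absorbed conjunctively into a single local function to raise one in-degree to $2$); it is only fair to add that the paper's own construction also misbehaves at $(q,\Delta)=(2,1)$, where $x_{c_{1,1}}+\one{x_{c_{1,1}}=z_1}$ is constant and the loop at $c_{1,1}$ disappears.
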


\begin{proof}
For any $f \in \functions[\loops{D},q]$ and any $v$, there is at least one value of $x_{\neighbourhood(v)}$ such that $f_v(x_{\neighbourhood(v)}) \ne x_v$. Therefore, there are at least $q^{n - d(v)}$ states $x$ such that $f_v(x) \ne x_v$. This yields
\begin{align*}
	T &:= \sum_{x \in [q]^n} \dH(x, f(x))\\
	&= \sum_{v \in V} \sum_{x \in [q]^n} \one { f_v(x) \ne x_v }\\
	&\ge \sum_{v \in V} q^{n - d(v)}\\
	&\ge n q^{n - \dMax(\loops{D})}.
\end{align*}
On the other hand, $T \le q^n (n - \stability[\loops{D}, q])$. Combining, we obtain
\[
	q^{\dMax(\loops{D})} \ge \frac{n}{n-\stability[\loops{D},q]}.
\]

Now let $D = (V,E)$ on $n = t q ^\Delta$ vertices be as follows. Let $V = C_1 \cup \dots \cup C_t$, where $C_i = \{ c_{i, 1}, \dots, c_{i, q^\Delta} \}$ for all $1 \le i \le t$ and the arc set of $D$ is all possible arcs from $A := \{c_{1,1}, \dots, c_{1,\Delta}\}$: $E = \{ (a, v) : a \in A, v \in V \setminus \{a\} \}$. 

Then let $f \in \functions[\loops{D},q]$ be defined as follows. Denote the set of possible values of $x_A$ as $z_1, \dots, z_{q^\Delta}$, then
$$
	f_{c_{i,j}}(x) = x_{c_{i,j}} + \one { x_A = z_j }.
$$
It is clear that for any $x$ and any $i$, exactly one vertex from $C_i$ guesses wrong, thus $\stability(f) = n-t$.
\end{proof}

\subsection{Lower bound for the Boolean case}

First of all, we make an important remark which characterises the functions with the highest stability in $\functions[\loops{D}, q]$. For any $f_v(x_{\neighbourhood(v)})$, let $\Xi (f_v) := \{ x_{\neighbourhood(v)} : f_v(x_{\neighbourhood(v)}) \ne x_v \}$. Then it is clear that if $\Xi(f_v) \subseteq \Xi (f'_v)$ for all $v$, then $\stability(f) \ge \stability(f')$. In particular, the functions in $\functions[\loops{D}, 2]$ with the highest stability all have $|\Xi(f_v)| = 0$ if $v$ is a source of $D$ (i.e. $f_v(x) = x_v$) and $|\Xi(f_v)| = 1$ otherwise.

\begin{theorem} \label{th:strict_stability_2}
For any loopless graph $D$, $\stability[\loops{D}, 2] \ge n/2$.
\end{theorem}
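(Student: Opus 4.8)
The plan is to build $f\in\functions[\loops{D},2]$ with $\dH(x,f(x))\le\lfloor n/2\rfloor$ for every $x\in[2]^n$, so that $\stability(f)=\min_x\big(n-\dH(x,f(x))\big)\ge\lceil n/2\rceil\ge n/2$. By the remark preceding the statement it suffices to look among the functions that put $f_s(x)=x_s$ on every source $s$ of $D$ and, on every non-source $v$, $f_v(x)=x_v+\one{x_{\neighbourhood(v;\loops{D})}=a^v}$ for a single ``bad pattern'' $a^v\in[2]^{\neighbourhood(v;\loops{D})}$; for such an $f$ one has $\dH(x,f(x))=|\{v\notin S: x_{\neighbourhood(v;\loops{D})}=a^v\}|$, where $S$ is the set of sources. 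For each non-source $v$ fix one in-neighbour $g(v)\in\neighbourhood(v;D)$. Since $v$ lying in that set forces $x_v=a^v_v$ and $x_{g(v)}=a^v_{g(v)}$, it is enough to choose only the two bits $\alpha_v:=a^v_v$ and $\beta_v:=a^v_{g(v)}$ (the remaining entries of $a^v$ are free, and any choice still makes $f_v$ depend essentially on all of $\neighbourhood(v;\loops{D})$, as needed for $G(f)=\loops{D}$) so that
\[
	\big|\{v\notin S:\ x_v=\alpha_v,\ x_{g(v)}=\beta_v\}\big|\ \le\ \Big\lfloor\frac n2\Big\rfloor\qquad\text{for all }x\in[2]^n.
\]

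I would recast this as a graph problem. Let $M$ be the multigraph on $V$ with one edge $e_v:=\{v,g(v)\}$ per non-source $v$; as these edges carry pairwise distinct ``tails'' $v$, prescribing $(\alpha_v,\beta_v)$ amounts to prescribing, freely and independently on each edge, a required colour for each of its two endpoints, and the left-hand side above is exactly the number of edges of $M$ \emph{satisfied} by the colouring $x$ (both endpoints getting their required colour). A connected component $K$ of $M$ on $p_K$ vertices has as many edges as non-sources, hence $p_K-1$ or $p_K$ edges (a connected graph has at least $p_K-1$), so $K$ is a tree (then with exactly one source) or a unicyclic graph (source-free). Since $\sum_K\lfloor p_K/2\rfloor\le\lfloor n/2\rfloor$, it suffices to label each component $K$ so that no colouring satisfies more than $\lfloor p_K/2\rfloor$ of its edges.

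The building block is a path or a cycle: orient it consistently (as a directed path, resp.\ directed cycle) and make every arc require colour $0$ at its tail and $1$ at its head. A satisfied edge is then an index $i$ with $x_{v_i}=0$, $x_{v_{i+1}}=1$; along a path no two such indices are consecutive, so there are at most $\lceil(p_K-1)/2\rceil=\lfloor p_K/2\rfloor$ of them, while around a cycle their number equals the number of ``descents'', the two summing to at most $p_K$, so again at most $\lfloor p_K/2\rfloor$. For a tree $K$ I would use the elementary lemma that \emph{the edges of any tree on $p$ vertices decompose into paths, at most one of odd length} --- proved by induction: in a longest path $a\,a'\,a''\cdots$ of the tree, either $a'$ has a second leaf-neighbour $m$ (delete $a,m$ and add the even path $a\,a'\,m$) or $a'$ has degree $2$ (delete the pendant pair $a,a'$ and add the even path $a\,a'\,a''$); in both cases two vertices are removed and the two removed edges form an even path, so the number of odd paths is inherited from a tree on $p-2$ vertices. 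Labelling each path of such a decomposition as in the building block, the number of satisfied edges is at most $\sum_i\lceil\ell_i/2\rceil=\tfrac12\big((p_K-1)+\#\{\text{odd-length paths}\}\big)\le\tfrac12\big((p_K-1)+1\big)$, which equals $\lfloor p_K/2\rfloor$ in either parity of $p_K$.

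The unicyclic components are the main obstacle. For such a $K$, with unique cycle $O$ of length $c$, the natural attempt keeps $O$ oriented as a directed cycle (contributing at most $\lfloor c/2\rfloor$) and decomposes the $p_K-c$ pendant edges into paths via the tree lemma; a direct parity count shows this already gives $\le\lfloor p_K/2\rfloor$ when $c$ is odd, or when $c$ and $p_K$ are both even. When $c$ is even and $p_K$ is odd a different local labelling is required: one makes a pendant edge ``compete'' with an adjacent cycle edge by requiring opposite colours at their common vertex (as already in the small instance $D:1\!\to\!2,\ 2\!\to\!1,\ 1\!\to\!3$), so that a colouring creating many cycle-ascents necessarily fails that pendant edge, and conversely --- recovering the bound $\lfloor p_K/2\rfloor$. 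Combining the per-component labellings produces the desired $f$, hence $\stability[\loops{D},2]\ge n/2$.
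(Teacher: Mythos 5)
Your overall strategy is sound and in fact runs parallel to the paper's: choosing one in-neighbour $g(v)$ for each non-source and keeping only the two bits $\alpha_v,\beta_v$ is exactly the passage to a co-functional spanning subgraph $H\le D$ (the content of Lemmas \ref{lem:D} and \ref{lem:E}), and restricting to local functions with a single bad pattern is the remark preceding the theorem. Your path/cycle building blocks are correct, and your treatment of tree components via a decomposition of the edge set into paths with at most one odd path is a complete and genuinely different argument from the paper's induction; the per-component bound $\lfloor p_K/2\rfloor$ does sum to $\lfloor n/2\rfloor$ as you say.

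The gap is the unicyclic components, which are precisely the case the paper's Lemma \ref{lem:out-cycle} and the forking/branching induction (Lemmas \ref{lem:A}--\ref{lem:C}) exist to handle, and your argument there is not a proof. First, the claim that the ``natural attempt'' (directed-cycle labels on $O$, tree-lemma labels on the pendant forest, chosen independently) already gives $\lfloor p_K/2\rfloor$ whenever $c$ is odd is unsubstantiated and, as stated, false. The additive count only yields $\lfloor c/2\rfloor+\sum_i\lfloor t_i/2\rfloor$ over the pendant trees, which exceeds $\lfloor p_K/2\rfloor$ already when two cycle vertices each carry a pendant leaf; and concretely, for the out-cycle with arcs $1\to 2\to 3\to 1$, $1\to 4$, $2\to 5$ (so $c=3$, $p_K=5$), the admissible natural labelling in which the cycle edges require $0$ at their tail and $1$ at their head and the pendant edges require $x_1=0,x_4=1$ and $x_2=0,x_5=1$ is beaten by $x=(0,0,1,1,1)$: the edges $(2,3)$, $\{1,4\}$, $\{2,5\}$ are all satisfied, i.e.\ $\dH(x,f(x))=3>\lfloor 5/2\rfloor$, so this $f$ has stability $2<n/2$. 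Hence the pendant prescriptions must be correlated with the cycle prescriptions in all parity cases, not only when $c$ is even and $p_K$ is odd. Second, even where you concede a correlation is needed, the ``compete'' device is a one-sentence sketch for a single pendant edge adjacent to the cycle, with no argument and no treatment of deeper pendant trees. Supplying this is essentially the whole content of the paper's out-cycle lemma (whose labelling makes each pendant edge and the following cycle edge require opposite values at their common vertex, so that the wrong set is independent in a Hamiltonian cycle) combined with the two-vertices-at-a-time forking/branching induction. Until the unicyclic case is actually established, the proposal does not prove the theorem.
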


The proof goes bottom-up, by proving the result for larger and larger classes of graphs. Firstly, 
an \textbf{out-cycle} consists of a cycle (or a single vertex) to which are appended some outgoing arcs, no more than one per vertex. More formally, a graph $D = (V, E)$ is an out-cycle if there exist $k \ge 1$, $0 \le l \le k$, and three sets $A = \{a_1, \dots, a_k\}$, $B = \{a_{b_1}, \dots, a_{b_l}\} \subseteq A$ and $C = \{c_1, \dots, c_l\}$ such that the following hold:
\begin{itemize}
	\item $A \cap C = \emptyset$ and $A \cup C  = V$;
	
	\item if $k \ge 2$, then $D[A]$ is a cycle, say $E(D[A]) = \{(a_i, a_{i+1}) : 1 \le i \le k\}$ with indices computed modulo $k$; if $k=1$, then $D[A]$ is trivial;
	
	\item $E = E(D[A]) \cup \{(a_{b_j}, c_j) : i=1, \dots, l\}$.
\end{itemize}
An example with $k=6$ and $l=3$ is given in Figure \ref{fig:out-cycle}.

\begin{figure}[ht]
\centering
\begin{tikzpicture}[scale = 1.5, vertex/.style={circle, 
  draw, 
  fill=black,
  inner sep=0.08cm}]
  
  \node [vertex] (a1) at (0,0) {};
  \node [vertex] (a2) at (1,0) {};
  \node [vertex] (a3) at (2,0) {};
  \node [vertex] (a4) at (3,0) {};
  \node [vertex] (a5) at (4,0) {};
  \node [vertex] (a6) at (5,0) {};

	\node (aa1) at (-0.6,0.2) {$a_1 = a_{b_1}$};     	
	\node (aa2) at (1,0.2) {$a_2$};
	\node (aa3) at (2,0.2) {$a_3 = a_{b_2}$};
	\node (aa4) at (3,0.2) {$a_4 = a_{b_3}$};
	\node (aa5) at (4,0.2) {$a_5$};
	\node (aa6) at (5.4,0.2) {$a_6$};
	
	\node [vertex] (c1) at (0,-1) {};
	\node [vertex] (c2) at (2,-1) {};
	\node [vertex] (c3) at (3,-1) {};

	\node (cc1) at (0,-1.2) {$c_1$};
	\node (cc2) at (2,-1.2) {$c_2$};
	\node (cc3) at (3,-1.2) {$c_3$};
	
	\arc{a1}{a2}
	\arc{a2}{a3}
	\arc{a3}{a4}
	\arc{a4}{a5}
	\arc{a5}{a6}
	\draw[-latex] (a6) to [bend right] (a1);
	\arc{a1}{c1}
	\arc{a3}{c2}
	\arc{a4}{c3}
\end{tikzpicture}
\caption{An out-cycle with $k=6$ and $l=3$.} \label{fig:out-cycle}
\end{figure}
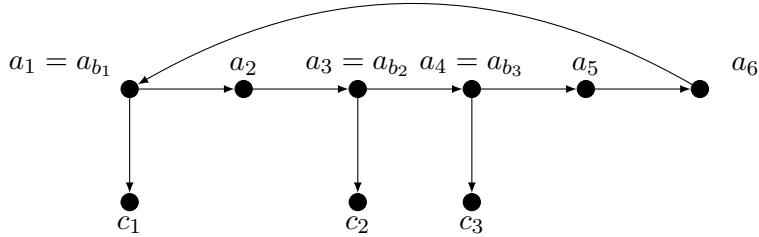

\begin{lemma} \label{lem:out-cycle}
If $D$ is an out-cycle, then $\stability[\loops{D}, 2] \ge n/2$.
\end{lemma}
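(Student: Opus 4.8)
The plan is to construct explicitly, for an out-cycle $D$ on $n$ vertices, a function $f \in \functions[\loops{D}, 2]$ with $\stability(f) \ge n/2$, by choosing each local function $f_v$ so that its ``bad set'' $\Xi(f_v)$ consists of a single point (except at the source $a_1$, if the cycle vertices are viewed as depending on each other), exactly as the remark preceding the theorem suggests is optimal. So for each non-source vertex $v$ I would set $f_v(x) = x_v \oplus \one{x_{\neighbourhood(v)} = \xi^v}$ for a carefully chosen pattern $\xi^v \in [2]^{\neighbourhood(v)}$, and I must pick the $\xi^v$'s so that for every global state $x$, at most $n/2$ vertices are ``triggered'' simultaneously. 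The key structural fact to exploit is that in an out-cycle the in-neighbourhoods are small: a leaf $c_j$ has in-neighbourhood $\{c_j, a_{b_j}\}$ inside $\loops D$, and a cycle vertex $a_{i+1}$ has in-neighbourhood $\{a_{i+1}, a_i\}$ inside $\loops D$; so whether $v$ is triggered depends only on two coordinates, one of which is $x_v$ itself.

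The main idea for controlling the count is a pairing/alternation argument along the cycle. First I would handle the pendant leaves: pair each leaf $c_j$ with its parent $a_{b_j}$ and arrange the trigger pattern of $c_j$ to require $x_{a_{b_j}}$ to take a value that, combined with the trigger condition on $a_{b_j}$ coming from the cycle, cannot both hold — i.e., choose the pattern on $c_j$ so that $c_j$ is triggered only when $a_{b_j}$ is \emph{not} triggered, and moreover so that at the cost of one extra case, triggering of $c_j$ ``uses up'' the slot that $a_{b_j}$ would otherwise claim. This way each leaf/parent pair contributes at most one error. For the cycle $D[A]$ itself (with loops), the condition for $a_{i+1}$ to be triggered is $x_{a_i} = \alpha_i$ and $x_{a_{i+1}} = \beta_i$ for chosen bits $\alpha_i, \beta_i$; I would choose these bits so that consecutive triggers are mutually exclusive — if $a_{i+1}$ is triggered then $x_{a_{i+1}} = \beta_i$, and I would set $\alpha_{i+1} \ne \beta_i$ so that $a_{i+2}$ cannot also be triggered. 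An alternating choice (say $\alpha_i = 0$, $\beta_i = 1$ for all $i$, so $a_{i+1}$ triggers iff $x_{a_i}=0, x_{a_{i+1}}=1$) forces the triggered cycle vertices to form an independent set in the cycle $C_k$, hence at most $\lfloor k/2 \rfloor$ of them, and then the leaves are absorbed into the remaining budget.

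The step I expect to be the main obstacle is making the leaf contributions and the cycle contributions add up cleanly to exactly $\le n/2 = (k+l)/2$ in all the boundary cases, in particular when $k$ is odd (so $\lfloor k/2\rfloor = (k-1)/2$ and the alternating independent set on $C_k$ genuinely saturates) and when several leaves hang off the same stretch of the cycle, or off vertices that are themselves ``active'' in the worst case. I would resolve this by a more careful bookkeeping: assign to each vertex $v$ a ``charge'' and show that triggered vertices can be injected into a set of size $\le n/2$. Concretely, partition $V$ into blocks, each block being either $\{a_i, a_{i+1}\}$ for a matching of the cycle, or $\{c_j, a_{b_j}\}$, possibly with some singletons left over when $k$ is odd and there is no pendant to absorb the defect; then argue that each block of size $2$ contains at most one triggered vertex and each leftover singleton, being on the cycle, is forced by the alternation to be untriggered in the extremal configuration, or can be counted against a neighbouring block. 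Once Lemma \ref{lem:out-cycle} is in place, the remark about $\Xi(f_v)\subseteq\Xi(f'_v)$ shows the bound is achieved by the ``minimal-trigger'' family, and the proof of the theorem will proceed by decomposing a general loopless $D$ into out-cycles (each vertex lies in at most one cycle after choosing a feedback structure, with the non-cycle arcs providing the pendant out-arcs), applying the lemma block-by-block, and noting that distinct out-cycle blocks interact only through shared in-neighbours in a way that does not increase the count.
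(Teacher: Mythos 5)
Your overall strategy (single-point trigger sets, mutual exclusivity of neighbouring triggers) is the right one and matches the paper's in spirit, but the construction you actually specify does not work, and the obstacle you flag is not fixable by bookkeeping alone. Take the out-cycle with $k=4$, $l=1$: cycle $a_1\to a_2\to a_3\to a_4\to a_1$ and one leaf $c_1$ attached to $a_1$, so $n=5$ and you need every state to have at most $2$ triggered vertices. With your uniform alternating pattern on the cycle ($a_{i+1}$ triggered iff $x_{a_i}=0$, $x_{a_{i+1}}=1$), the leaf's trigger is a condition on $(x_{c_1},x_{a_1})$, and both possible choices of the required value of $x_{a_1}$ fail: if the leaf requires $x_{a_1}=0$, the state $x_A=(0,1,0,1)$ triggers $a_2$, $a_4$ and (for the right $x_{c_1}$) also $c_1$, giving $3>5/2$; if it requires $x_{a_1}=1$, the state $x_A=(1,0,1,0)$ triggers $a_1$, $a_3$ and $c_1$. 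So no choice of leaf pattern repairs the uniform alternating cycle pattern, and your fallback "partition into blocks / charge the leftover singleton to a neighbouring block" cannot help, because the failure is in the function itself, not in how its errors are counted.

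The missing idea is to break the alternation exactly at the parents of leaves. In the paper's construction every cycle vertex's trigger requires its \emph{own} bit to be $0$; on an arc $a_i\to a_{i+1}$ with $a_i\notin B$ the trigger of $a_{i+1}$ additionally requires $x_{a_i}=1$ (exclusive with $a_i$), but on an arc leaving a parent $a_{b_j}\in B$ the pattern is flipped so that $a_{b_j+1}$'s trigger requires $x_{a_{b_j}}=0$, while the leaf $c_j$'s trigger requires $x_{a_{b_j}}=1$. Thus $c_j$ is simultaneously exclusive with \emph{both} $a_{b_j}$ and $a_{b_j+1}$, i.e. the leaf is inserted between them, and the triggered set $\Delta(x,f(x))$ becomes an independent set of a Hamiltonian cycle on all $n$ vertices (parent--leaf--successor replacing the parent--successor edge), which immediately gives $\dH(x,f(x))\le\lfloor n/2\rfloor$. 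In your scheme the leaf is only exclusive with one of these two vertices, which is exactly why the count can reach $\lceil n/2\rceil+1$; in the example above, exclusivity with $a_2$ as well as $a_1$ is what kills the bad states. So the lemma needs this modified pattern (or an equivalent one); once it is in place your independence/counting argument goes through verbatim as independence in an $n$-cycle rather than in $C_k$ plus pairs.
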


\begin{proof}
Let $D$ be an out-cycle, with the sets $A$, $B$ and $C$ as above. Then let $f \in \functions[\loops{D}, 2]$ be defined as follows:
\begin{align*}
	f_{a_{b_j+1}}(x) &= x_{a_{b_j}} + (x_{a_{b_j + 1}} + 1) (x_{a_{b_j}} + 1)\\
	&= x_{a_{b_j + 1}} \lor \neg x_{a_{b_j}},  && \forall a_{b_j}  \in B,\\
	f_{a_i}(x) &= x_{a_i} + (x_{a_i} + 1) x_{a_{i-1}}\\
	&= x_{a_i} \lor x_{a_{i-1}},  && \text{if } a_{i-1} \notin B,\\
	f_{c_j}(x) &= x_{c_j} + x_{c_j} x_{a_{b_j}}\\
	&= x_{c_j} \land \neg x_{a_{b_j}} && \forall c_j \in C.
\end{align*}

We now construct the simple graph $D' = (V, E')$ as follows. Its edge set is
$$
	E' = \{ a_{b_j} c_j, c_j a_{b_j + 1} : 1 \le j \le l \} \cup \{ a_i a_{i+1} : a_i \notin B \},
$$
and hence it is a Hamiltonian cycle. An example of $D'$ is given in Figure \ref{fig:D'}. We prove that for any $x \in [2]^n$, $J := \Delta(x, f(x))$ is an independent set of $D'$.
\begin{itemize}
	\item If $a_{b_j} \in J$, then $x_{a_{b_j}} = 0$ hence $c_j \notin J$. Thus there is no edge of the form $a_{b_j} c_j$ in $J$.
	
	\item If $c_j \in J$, then $x_{a_{b_j}} = 1$ hence $a_{b_{j+1}} \notin J$. Thus there is no edge of the form $c_j a_{b_j + 1}$ in $J$.
	
	\item If $a_{i+1} \in J$, where $a_i \notin B$, then $x_{a_i} = 1$ hence $x_{a_i} \notin J$. Thus there is no edge of the form $a_i a_{i+1}$ in $J$.
\end{itemize}
Thus, $\dH(x, f(x)) \le n/2$.
\end{proof}

\begin{figure}[ht]
\centering
\begin{tikzpicture}[scale = 1.5, vertex/.style={circle, 
  draw, 
  fill=black,
  inner sep=0.08cm}]
  
  \node [vertex] (a1) at (0,0) {};
  \node [vertex] (a2) at (1,0) {};
  \node [vertex] (a3) at (2,0) {};
  \node [vertex] (a4) at (3,0) {};
  \node [vertex] (a5) at (4,0) {};
  \node [vertex] (a6) at (5,0) {};

	\node (aa1) at (-0.6,0.2) {$a_1 = a_{b_1}$};     	
	\node (aa2) at (1,0.2) {$a_2$};
	\node (aa3) at (2,0.2) {$a_3 = a_{b_2}$};
	\node (aa4) at (3,0.2) {$a_4 = a_{b_3}$};
	\node (aa5) at (4,0.2) {$a_5$};
	\node (aa6) at (5.4,0.2) {$a_6$};
	
	\node [vertex] (c1) at (0,-1) {};
	\node [vertex] (c2) at (2,-1) {};
	\node [vertex] (c3) at (3,-1) {};

	\node (cc1) at (0,-1.2) {$c_1$};
	\node (cc2) at (2,-1.2) {$c_2$};
	\node (cc3) at (3,-1.2) {$c_3$};
	
	\edge{a1}{c1}
	\edge{c1}{a2}
	\edge{a2}{a3}
	\edge{a3}{c2}
	\edge{c2}{a4}
	\edge{a4}{c3}
	\edge{c3}{a5}
	\edge{a5}{a6}
	\draw (a6) to [bend right] (a1);
\end{tikzpicture}
\caption{The simple graph $D'$ corresponding to $D$ in Figure \ref{fig:out-cycle}} \label{fig:D'}
\end{figure}
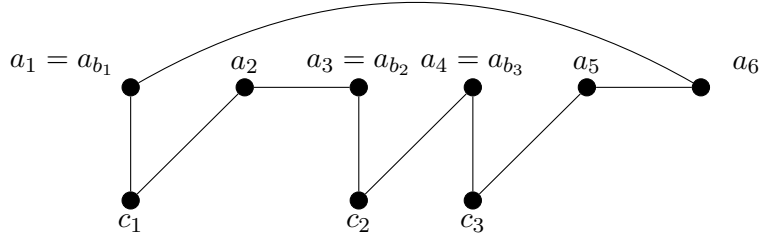

Secondly, let $D = (V, E)$ be a loopless graph and let $a \in V$. We then introduce two ways to append two vertices $u$ and $v$ to $a$. First, \textbf{forking} corresponds to the case where $u$ and $v$ are both connected to $a$: $D_a^F = (V \cup \{u,v\}, E \cup \{ (a, u), (a, v)\})$. Second, \textbf{branching} corresponds to the case where $a, u, v$ form a path: $D_a^B = (V \cup \{u,v\}, E \cup \{ (a, u), (u, v)\})$.

\begin{lemma} \label{lem:A}
If $D$ is obtained from $H$ by branching or forking, then $\stability[\loops{D}, 2] \ge \stability[\loops{H}, 2] + 1$.
\end{lemma}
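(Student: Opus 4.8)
The plan is to take an optimal Boolean FDS $h \in \functions[\loops{H}, 2]$ realising $\stability[\loops{H}, 2]$, and extend it to an FDS $f \in \functions[\loops{D}, 2]$ on the two new vertices $u, v$ so that, for every state $x \in [2]^{V(D)}$, at most one of $u, v$ fails to be fixed by $f$ while the behaviour on $V(H)$ is unchanged (or at least no worse). Since the extra vertices contribute $+2$ to $n$ and at most $+1$ to the number of "wrong" coordinates in the worst case, this gives $\stability(f) \ge \stability(h) + 1$, and the interaction-graph constraints force $G(f) = \loops{D}$ exactly (the new arcs $(a,u),(a,v)$ or $(a,u),(u,v)$ must each be essential, plus the loops on $u,v$).

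For \textbf{forking} ($D_a^F$, new arcs $(a,u), (a,v)$): define $f_w = h_w$ for $w \in V(H)$, and on the new vertices use complementary thresholds on $x_a$, e.g.
\begin{align*}
	f_u(x) &= x_u \lor x_a = x_u + (x_u+1)x_a,\\
	f_v(x) &= x_v \land \neg x_a = x_v + x_v x_a.
\end{align*}
Then $u$ is unfixed only when $x_u = 0, x_a = 1$, and $v$ is unfixed only when $x_v = 1, x_a = 0$; these require $x_a = 1$ and $x_a = 0$ respectively, so they are never simultaneously unfixed. Hence for any $x$, $\dH(x, f(x)) \le \dH(x_{V(H)}, h(x_{V(H)})) + 1 \le n_H - \stability(h) + 1 = (n_D - 2) - \stability(h) + 1$, giving $\stability(f) \ge \stability(h) + 1$. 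Each of $(a,u), (a,v)$ is clearly essential (flipping $x_a$ changes $f_u$ when $x_u=0$, changes $f_v$ when $x_v=1$), and $h$ already uses a loop on every non-source of $H$ and identity on sources, so after verifying $G(f) = \loops{D}$ we are done with this case.

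For \textbf{branching} ($D_a^B$, new arcs $(a,u), (u,v)$, forming the path $a \to u \to v$): again keep $f_w = h_w$ on $V(H)$ and set $f_u = x_u \lor x_a$ and $f_v = x_v \land \neg x_u$ (the same gadget as the out-cycle lemma, Lemma~\ref{lem:out-cycle}, on the appended path). Then $u$ unfixed forces $x_u = 0$, while $v$ unfixed forces $x_u = 1$, so again $u$ and $v$ are never simultaneously unfixed, and the same counting gives $\stability(f) \ge \stability(h) + 1$. The arc $(a,u)$ is essential (flip $x_a$ with $x_u = 0$), the arc $(u,v)$ is essential (flip $x_u$ with $x_v = 1$), and the loops on $u, v$ are present, so $G(f) = \loops{D}$.

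The only genuinely delicate point — the "main obstacle" — is making sure that appending $u,v$ does not accidentally force $\stability$ down via interaction with $h$'s worst-case states: but since $f$ restricted to the coordinates $V(H)$ is literally $h$ and the new local functions read only $x_a$ (and $x_u$) plus the respective loop variable, the worst state for $f$ decomposes as (worst state for $h$) together with an adversarial choice of $x_u, x_v$, and the gadget caps that adversarial contribution at $1$. One should also double-check the degenerate cases (e.g. $a$ a source of $H$, or $V(H)$ small), but these cause no trouble: the construction is uniform in $h$, and the earlier remark on $\functions[\loops{D},2]$ (that optimal functions have $|\Xi(f_v)| \le 1$) confirms the gadget is of the right form.
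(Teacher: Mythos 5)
Your overall strategy is exactly the paper's: keep an optimal $h \in \functions[\loops{H},2]$ unchanged on $V(H)$ and append a two-vertex gadget on $u,v$ whose local functions read only the new in-neighbour and the loop variable, arranged so that in every state at most one of $u,v$ is unfixed; the counting $\stability(f) \ge \stability(h)+1$ and the essentiality checks for the new arcs and loops then go through as you describe. Your branching gadget ($f_u = x_u \lor x_a$, $f_v = x_v \land \neg x_u$) is correct: $u$ unfixed forces $x_u = 0$ while $v$ unfixed forces $x_u = 1$.

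The forking gadget, however, is broken as written. With $f_v(x) = x_v \land \neg x_a = x_v + x_v x_a$, the vertex $v$ is unfixed exactly when $x_v = 1$ and $x_a = 1$ (not $x_a = 0$ as you claim: if $x_a = 0$ then $f_v(x) = x_v$). Since $u$ is unfixed exactly when $x_u = 0$ and $x_a = 1$, any state with $x_a = 1$, $x_u = 0$, $x_v = 1$ leaves both $u$ and $v$ unfixed, so the disjointness claim fails and the construction only guarantees $\stability(f) \ge \stability(h)$; in particular, if some worst-case state of $h$ has $x_a = 1$, the bound you need is not established. The repair is the one your verbal condition already describes: take $f_v(x) = x_v \land x_a$ (i.e.\ $x_v x_a$ over $[2]$), which is unfixed exactly when $x_v = 1$ and $x_a = 0$, making the two failure events require $x_a = 1$ and $x_a = 0$ respectively. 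Up to swapping the roles of $u$ and $v$, this corrected gadget is precisely the paper's ($g_u = x_u \land x_a$, $g_v = x_v \lor x_a$), and with it your argument coincides with the paper's proof.
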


\begin{proof}
Let $f \in \functions[\loops{H}, 2]$ with $\stability(f) \ge n/2$. Firstly, if $D = H_a^F$, then define $g \in \functions[\loops{D}, 2]$ as
\begin{align*}
	g_i(x) &= f_i(x) && \forall i \in V(H)\\
	g_u(x) &= x_u + (x_a + 1) x_u\\
	&= x_u \land x_a\\
	g_v(x) &= x_v + x_a (x_v + 1)\\
	&= x_v \lor x_a.
\end{align*}
Then, depending on the value of $x_a$, either $g_u(x) = x_u$ or $g_v(x) = x_v$ and we obtain $\stability(g) = \stability(f) + 1$. 

Secondly, if $D = H_a^F$, then define $g \in \functions[\loops{D}, 2]$ as
\begin{align*}
	g_i(x) &= f_i(x) && \forall i \in V(H)\\
	g_u(x) &= x_u + (x_a+1) x_u\\
	&= x_u \land x_a\\
	g_v(x) &= x_v + (x_u + 1) x_v\\
	&= x_v \land x_u.
\end{align*}
Then, depending on the value of $x_u$, either $g_u(x) = x_u$ or $g_v(x) = x_v$ and we obtain $\stability(g) = \stability(f) + 1$.
\end{proof}

A graph $H$ is \textbf{co-functional} if $\dMax(H) \le 1$.

\begin{lemma} \label{lem:B}
Any connected co-functional graph can be obtained from an out-cycle by repeatedly forking and branching.
\end{lemma}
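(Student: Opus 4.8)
The plan is to argue by induction on $n=|V(H)|$, with base case ``$H$ is an out-cycle''. Single vertices, single arcs, and directed cycles are all out-cycles, so the base case is immediate; for the inductive step it suffices to show that every connected co-functional graph $H$ that is \emph{not} an out-cycle can be written as $H=(H')_a^F$ or $H=(H')_a^B$ for some vertex $a$ and some connected co-functional graph $H'$ on $n-2$ vertices, since the induction hypothesis applied to $H'$ then finishes the proof. (We use throughout that $H$ is loopless, as it must be if it is to be obtained from an out-cycle by forking and branching.)

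The first step is to pin down the shape of a connected co-functional graph. Since every in-degree is at most $1$ we have $|E(H)|=\sum_{v}\degree(v;H)\le n$, while weak connectivity forces $|E(H)|\ge n-1$. If $|E(H)|=n-1$, the underlying graph is a tree, exactly one vertex $r$ is a source, and $H$ is the out-arborescence rooted at $r$. If $|E(H)|=n$, then every in-degree equals $1$, there is no source, and reversing all arcs yields a weakly connected graph with all out-degrees equal to $1$; by the standard structure of functional digraphs this consists of a single cycle with in-arborescences attached, so $H$ is a single directed cycle $C$ together with pairwise vertex-disjoint out-arborescences, one rooted at (a subset of) the vertices of $C$. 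In either case let $K$ be the ``core'': $K=\{r\}$ in the first case, $K=V(C)$ in the second. When $n\ge 2$ every vertex of $K$ has out-degree $\ge 1$, so every leaf of $H$ (vertex of out-degree $0$) lies off $K$, at distance $\ge 1$ from $K$ along the arborescences; ``parent'' and ``distance to $K$'' are then well defined.

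Next I would check the two reverse moves and then locate one. Undoing a forking at $a$ means deleting two leaves $u,v$ that share the in-neighbour $a$; undoing a branching at $a$ means deleting a leaf $v$ together with its in-neighbour $u$, in the case where $v$ is the unique out-neighbour of $u$ and $u$ has an in-neighbour $a\ne u,v$ (necessarily unique by co-functionality). In both moves $\{u,v\}$ is a pendant piece of the underlying graph, so $H':=H-\{u,v\}$ stays connected; deleting vertices never raises an in-degree, so $H'$ stays co-functional; and $H$ is recovered as $(H')_a^F$, resp.\ $(H')_a^B$. To find such a move when $H$ is not an out-cycle, let $d$ be the maximum distance from $K$ to a leaf; since $H$ strictly contains its core ($K$ alone being an out-cycle), $d\ge 1$. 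If $d\ge 2$, take a leaf $v$ at distance $d$ and its parent $u$, at distance $d-1\ge 1$, so $u\notin K$; by maximality every child of $u$ is a leaf, so if $u$ has two children we undo a forking at $u$, and otherwise $v$ is the unique out-neighbour of $u$, $u\notin K$ has a parent $a$, and we undo a branching at $a$. If $d=1$, every attached arborescence is a single pendant vertex, so — $H$ not being an out-cycle — some vertex $a$ of $K$ has at least two pendant children $u,v$, and we undo a forking at $a$. In each case $H'=H-\{u,v\}$ is connected and co-functional with $H=(H')_a^F$ or $H=(H')_a^B$, completing the induction.

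The main obstacle is this last step: one must guarantee that a \emph{legal} reverse move always exists, and the delicate point is at distance $1$ from the core, where the vertex that would play the role of $a$ in a forking may be a cycle vertex — it then has out-degree $\ge 2$ and cannot be ``un-branched''. Organising the case split by the value of $d$, rather than by the local picture at an arbitrary deepest leaf, is precisely what handles this cleanly. The structural description in the second step is routine, but it should be stated carefully enough that the notions of core, parent, and distance to the core are unambiguous, since the whole argument rests on them.
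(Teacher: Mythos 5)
Your proof is correct and follows essentially the same route as the paper: induction on the number of vertices, peeling off two vertices at a time by undoing a fork (two sibling leaves) or a branch (a pendant path of length two) until only an out-cycle remains. Your bookkeeping via the core $K$ and the depth $d$ of a deepest leaf is just a more explicit version of the paper's case split on vertices all of whose out-neighbours are leaves, and your verification that the reverse moves preserve connectivity and co-functionality makes the argument, if anything, more careful than the published one.
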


\begin{proof}
We prove it by induction on the number of vertices. It is clearly true for up to three vertices; suppose it is true for $n-2$. For any connected co-functional graph $D$, let $A$ denote the set of vertices in the only cycle of $D$ and let $S$ denote the set of vertices whose out-neighbourhood consists of only leaves. 
\begin{itemize}
	\item Case 1: there exists $s \in S$ with out-degree at least two. Referring to two out-neighbours of $s$ as $u$ and $v$, we see that $D = (D \setminus \{u, v\})_s^F$, and by induction hypothesis, $D$ can be obtained by successive forking and branching.
	
	\item Case 2: all vertices in $S$ have out-degree one and there exists an element $s \in S \setminus A$. Denoting its out-neighbour as $v$ and its in-neighbour as $a$, then $D = (D \setminus \{s, v\})_a^B$, and by induction hypothesis, $D$ can be obtained by successive forking and branching. 
	
	\item Case 3: all vertices in $S$ have out-degree one and $S \subseteq A$. Then $D$ is an out-cycle.
\end{itemize}
\end{proof}

We can now prove the theorem for co-functional graphs.

\begin{lemma} \label{lem:C}
For any co-functional graph $D$, $\stability[\loops{D}, 2] \ge n/2$.
\end{lemma}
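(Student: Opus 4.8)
The plan is to reduce the general co-functional case to the connected case handled by the preceding lemmas. First I would observe that if $D$ is co-functional, then each connected component of $D$ is also co-functional, and the strict stability is additive over connected components: an FDS on $\loops{D}$ splits into independent FDSs on the $\loops{}$ of each component, because the local function $f_v$ for a vertex $v$ in component $C$ only depends on coordinates in $C$ (and the loop on $v$), so the interaction-graph constraint and the Hamming distance both decompose as a sum over components. Concretely, if $D = D_1 \sqcup \dots \sqcup D_m$ with $D_j$ on $n_j$ vertices, then $\stability[\loops{D}, 2] = \sum_{j=1}^m \stability[\loops{D_j}, 2]$, so it suffices to prove $\stability[\loops{D_j}, 2] \ge n_j/2$ for each $j$.

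Next, I would combine Lemma \ref{lem:out-cycle}, Lemma \ref{lem:A}, and Lemma \ref{lem:B}. By Lemma \ref{lem:B}, any connected co-functional graph $D_j$ on $n_j$ vertices is obtained from some out-cycle $O$ (on $n_0$ vertices, say) by a sequence of forking and branching operations; each such operation adds exactly two vertices, so the number of operations is $(n_j - n_0)/2$. By Lemma \ref{lem:out-cycle}, $\stability[\loops{O}, 2] \ge n_0/2$. Then applying Lemma \ref{lem:A} once per operation, each step increases the strict Boolean stability by at least $1$ while adding $2$ vertices, so after all $(n_j - n_0)/2$ steps we get $\stability[\loops{D_j}, 2] \ge n_0/2 + (n_j - n_0)/2 = n_j/2$. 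Summing over components gives $\stability[\loops{D}, 2] \ge n/2$, as required.

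The one subtlety to check — and the step I expect to be the only real obstacle — is that the isolated-vertex / trivial components and the base cases of Lemma \ref{lem:B} (graphs on at most three vertices) genuinely satisfy the bound, and more importantly that every connected co-functional graph really does contain a (unique) cycle so that Lemma \ref{lem:B} applies; a connected co-functional graph with $\dMax \le 1$ is a ``functional graph'' in reverse, i.e. its underlying structure is a ``rho'' shape, and a single vertex with just its loop in $\loops{}$ counts as the $k=1$ out-cycle. I would note that a single isolated vertex $v$ (no non-loop arcs) gives $f_v(x) = x_v$ with stability $1 = n/2 \cdot 2 / 2$... more precisely $n=1$ and $1 \ge 1/2$, consistent with the $k=1$, $l=0$ out-cycle. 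Everything else is bookkeeping: confirming the operation count matches the vertex count and that Lemma \ref{lem:A} can be applied along the specific construction sequence produced by Lemma \ref{lem:B} (which it can, since Lemma \ref{lem:A}'s hypothesis is simply that $D$ is obtained from $H$ by one forking or branching, with no extra conditions on $H$).

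Thus the proof is essentially: decompose into components, invoke Lemma \ref{lem:B} to express each component as an out-cycle grown by $\tfrac{1}{2}(n_j - n_0)$ operations, apply Lemma \ref{lem:out-cycle} at the root and Lemma \ref{lem:A} at each step, and add up. No heavy computation is needed.
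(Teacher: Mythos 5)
Your proposal is correct and follows essentially the same route as the paper: decompose into connected components (where the strict stability is additive), and handle each connected co-functional component by Lemmas \ref{lem:out-cycle}, \ref{lem:A} and \ref{lem:B}, with your explicit bookkeeping that each forking/branching step adds two vertices and at least one unit of stability being exactly the arithmetic the paper leaves implicit. Your worry that every connected co-functional graph must contain a cycle is unnecessary (and not literally true, since out-trees are acyclic co-functional graphs): Lemma \ref{lem:B} as stated already covers these via the trivial $k=1$ out-cycle, so the argument goes through unchanged.
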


\begin{proof}
If $D$ is connected, then by Lemmas \ref{lem:out-cycle}, \ref{lem:A} and \ref{lem:B}, $\stability[\loops{D}, 2] \ge n/2$. Let $D$ be a co-functional graph with connected components $C_1, \dots, C_s$, with $n_1, \dots, n_s$ vertices respectively. Then $C_i$ is co-functional for all $1 \le i \le s$, and hence 
$$
	\stability[\loops{D}, 2] = \sum_{i=1}^s \stability[\loops{C_i}, 2] \ge \sum_{i=1}^s \frac{n_i}{2} \ge \frac{n}{2}.
$$
\end{proof}

For any loopless $H$ and $D$, we write $H \le D$ if $H$ is a spanning subgraph of $D$ and for all $v \in V$, $v$ is a source in $H$ only if $v$ is a source in $D$.

\begin{lemma} \label{lem:D}
If $H \le D$, then $\stability[\loops{H}, 2] \le \stability[\loops{D}, 2]$.
\end{lemma}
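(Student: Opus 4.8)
The plan is to start from an optimal FDS on the smaller graph and lift it to the larger one without losing stability, exploiting that $H \le D$ forces $\neighbourhood(v;\loops{H}) \subseteq \neighbourhood(v;\loops{D})$ for every $v$ and that $H$ and $D$ have the same sources (since $E(H) \subseteq E(D)$ gives $\mathrm{sources}(D) \subseteq \mathrm{sources}(H)$, while the definition of $H \le D$ gives the reverse inclusion). It is convenient to rephrase stability through ``bad sets'': for $f \in \functions[\loops{H},2]$ and a vertex $v$, set $B_v(f) := \{x \in [2]^n : f_v(x) \ne x_v\}$, a cylinder determined by $x_{\neighbourhood(v;\loops{H})} \in \Xi(f_v)$. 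Then $\stability(f) = n - \max_{x} |\{v : x \in B_v(f)\}|$, so stability depends only on the maximum overlap of the bad cylinders. My target is a $g \in \functions[\loops{D},2]$ with $B_v(g) \subseteq B_v(f)$ for every $v$: this immediately forces $\{v : x \in B_v(g)\} \subseteq \{v : x \in B_v(f)\}$ for all $x$, hence $\stability(g) \ge \stability(f)$, and combined with a maximising $f$ this yields $\stability[\loops{D},2] \ge \stability(g) \ge \stability(f) = \stability[\loops{H},2]$, which is the assertion.

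To build $g$, I take a maximiser $f \in \functions[\loops{H},2]$ in the canonical form guaranteed by the remark preceding Theorem \ref{th:strict_stability_2}: $f_v = \id$ (so $\Xi(f_v) = \emptyset$) whenever $v$ is a source, and $|\Xi(f_v)| = 1$ otherwise, say $\Xi(f_v) = \{\xi^v\}$ with $\xi^v \in [2]^{\neighbourhood(v;\loops{H})}$. For a source $v$ I simply keep $g_v(x) = x_v$; since sources coincide in $H$ and $D$, we have $\neighbourhood(v;\loops{D}) = \{v\}$ and this is legitimate, with $B_v(g) = \emptyset = B_v(f)$. For a non-source $v$, write $M_v := \neighbourhood(v;\loops{D}) \setminus \neighbourhood(v;\loops{H})$ for the extra in-neighbours acquired in $D$, fix any $\mu^v \in [2]^{M_v}$, and let $g_v$ be the single-bad-configuration function on $\neighbourhood(v;\loops{D})$ whose unique bad configuration is the extension $\zeta^v := (\xi^v, \mu^v)$; concretely $g_v(x) = x_v + \one{x_{\neighbourhood(v;\loops{D})} = \zeta^v}$.

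Two things then need checking. First, $B_v(g) \subseteq B_v(f)$: indeed $B_v(g) = \{x : x_{\neighbourhood(v;\loops{D})} = \zeta^v\} \subseteq \{x : x_{\neighbourhood(v;\loops{H})} = \xi^v\} = B_v(f)$, because $\zeta^v$ restricts to $\xi^v$ on the smaller neighbourhood. Second, $G(g) = \loops{D}$, i.e. $g_v$ depends essentially on every coordinate of $\neighbourhood(v;\loops{D})$. For a non-loop coordinate $u$ this is immediate by toggling $x_u$ away from $\zeta^v_u$ while fixing the remaining coordinates at $\zeta^v$; for the loop coordinate $v$ one fixes the other neighbours at some value differing from $\zeta^v$ there (possible because a non-source has at least one further neighbour, so the neighbourhood has size $\ge 2$) and observes that $g_v$ reduces to $x_v$. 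This is exactly the essential-dependence computation already implicit in the remark, now performed over the larger neighbourhood.

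The only real obstacle is reconciling essential dependence on the \emph{new} neighbours $M_v$ with the constraint $B_v(g) \subseteq B_v(f)$, since shrinking a bad set risks destroying dependence on a coordinate; the single-bad-configuration extension resolves this because one bad configuration over a neighbourhood of size $\ge 2$ already witnesses dependence on each coordinate. This is also precisely where the hypothesis $H \le D$ is used: if some $v$ were a source in $H$ but not in $D$, then $B_v(f) = \emptyset$ would force $B_v(g) = \emptyset$, hence $g_v = \id$, which cannot depend on the genuine in-neighbours that $v$ has in $D$ — so the ``same sources'' condition is exactly what the construction requires.
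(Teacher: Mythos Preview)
Your proof is correct and follows essentially the same strategy as the paper's: construct $g$ on $\loops{D}$ by shrinking each non-source's bad set via a fixed condition on the extra in-neighbours $M_v$, so that $\Delta(x,g(x)) \subseteq \Delta(x,f(x))$ for every $x$. The paper works with a general $f \in \functions[\loops{H},2]$ and ANDs the error indicator $g_u$ with $\bigwedge_{a \in M_u} x_a$ (i.e.\ your $\mu^v = (1,\dots,1)$), whereas you first pass to the canonical single-bad-configuration form and then allow arbitrary $\mu^v$; your verification that $G(g) = \loops{D}$ is also more explicit than the paper's.
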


\begin{proof}
Let $f \in \functions[\loops{H}, 2]$. Firstly, if $v$ is a source (of $H$ or of $D$, which is equivalent), then $f_v(x) = x_v + \epsilon_v$ for some $\epsilon_v \in [2]$. Secondly, if $u$ is not a source, then $f_u(x) = x_u + g_u(x_{\neighbourhood(u; \loops{H})})$ for some function $g_u$. We then introduce the function $f' \in \functions[\loops{D}, 2]$, defined as
\begin{align*}
	f'_v(x) &= x_v && \text{if } v \text{ is a source},\\
	f'_u(x) &= x_u + g_u(x_{\neighbourhood(u; \loops{H})}) \land \bigwedge_{a \in \neighbourhood(u; \loops{D}) \setminus \neighbourhood(u; \loops{H})} x_a && \text{if } u \text{ is not a source}.
\end{align*}
Therefore, for any $x \in [2]^n$, we have $f'_v(x) = x_v$ for all sources $v$ and $f'_u(x) \ne x_u$ only if $f_u(x) \ne x_u$ otherwise. Thus $\Delta(x, f'(x)) \subseteq \Delta(x, f(x))$ and $\stability(f') \ge \stability(f)$.
\end{proof}

\begin{lemma} \label{lem:E}
For any loopless graph $D$, there exists a co-functional graph $H$ such that $H \le D$.
\end{lemma}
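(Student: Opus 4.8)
The goal is to find, inside any loopless $D$, a spanning subgraph $H$ with $\dMax(H) \le 1$ that respects the source condition of the relation $\le$ (no new sources created). Since $\le$ and co-functionality are both defined componentwise, it suffices to treat each (weakly) connected component of $D$ separately; so I would reduce at once to the case where $D$ is connected and has at least one arc. The plan is then to build $H$ by selecting, for each non-source vertex $v$ of $D$, exactly one in-arc $(p(v), v) \in E(D)$, and to take $E(H) = \{ (p(v), v) : v \text{ not a source of } D \}$. Every vertex keeps in-degree at most $1$ in $H$, so $H$ is co-functional; and a vertex is a source in $H$ iff we picked no in-arc for it, which by construction happens only when it is already a source of $D$. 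Thus the source condition and the spanning condition both hold automatically, and $H \le D$.

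The one thing that must be checked is that the choice function $p$ can be made \emph{consistently}, i.e. that the resulting $H$ is a legitimate graph on $V$ — this is immediate since we are just choosing a subset of $E(D)$ — and, more substantively, that nothing forces $H$ to have $\dMax \ge 2$: but in-degree in $H$ at $v$ is exactly $\one{v \text{ is not a source of } D} \le 1$, so this is fine. In fact the cleanest phrasing avoids any mention of functional digraphs as ``functions'': just observe that choosing one in-arc per non-source vertex is always possible (each non-source vertex has a nonempty in-neighbourhood in $D$ by definition of ``source''), and that this defines $H$.

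The only genuine subtlety, and the step I expect to be the main (minor) obstacle, is making sure the definition of $\le$ is being applied with its exact quantifier: $H \le D$ requires that $v$ is a source in $H$ \emph{only if} $v$ is a source in $D$, and I should double-check that a vertex $v$ that is a source of $D$ but to which some arc $(v,w)$ of $D$ is attached does not accidentally get an in-arc in $H$ — it does not, because in $H$ we only add arcs $(p(w),w)$ pointing \emph{into} non-source vertices $w$, and $p(w)$ being the tail places no in-arc at $p(w)$ itself. Hence every source of $D$ remains a source of $H$, and conversely every source of $H$ is a source of $D$; combined with $E(H)\subseteq E(D)$ and $V(H)=V(D)$ this gives $H \le D$. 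Finally, reassembling the components, the disjoint union of the per-component graphs $H$ is a co-functional spanning subgraph of $D$ with no new sources, completing the proof.
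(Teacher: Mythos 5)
Your proof is correct, and it is genuinely simpler than the one in the paper. You just pick, for every non-source vertex $v$ of $D$, one in-arc $(p(v),v)\in E(D)$ (possible precisely because $v$ is not a source) and let $E(H)$ be the set of chosen arcs; then every vertex has in-degree $\le 1$ in $H$, so $H$ is co-functional, $E(H)\subseteq E(D)$ with $V(H)=V(D)$ makes $H$ a loopless spanning subgraph, and the sources of $H$ are exactly the sources of $D$, which is (more than) what the relation $H\le D$ demands. The paper instead builds $H$ by running depth-first search from a chosen vertex in each initial strong component, taking the resulting out-trees, and adding one closing arc $(u_i,v_i)$ at each root $v_i$ that is not a source of $D$; this yields an $H$ whose components have a prescribed global shape (an out-tree, possibly with one extra arc creating a cycle through the root), but none of that extra structure is used later: Lemmas \ref{lem:B}--\ref{lem:D} only require that $H$ be co-functional and $H\le D$, both of which your local choice of one in-arc per non-source vertex already guarantees. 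Your preliminary reduction to connected components is unnecessary (the construction works globally in one step), but harmless. So the two arguments prove the same statement; yours is shorter and avoids the bookkeeping about initial strong components and reachability that the DFS construction needs to ensure the spanning and in-degree conditions.
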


\begin{proof}
Let $C_1, \dots, C_s$ be the initial strong components of $D$, and let $v_1, \dots, v_s$ such that $v_i \in C_i$ for all $i$. We construct $H$ as follows. For $i$ from $1$ to $s$, do the following
\begin{enumerate}
	\item Use depth-first-search from $v_i$ to construct an out-tree $H_i$.
	
	\item If $v_i$ is not a source in $D$, then add the arc $(u_i, v_i)$, where $u_i$ is a randomly chosen vertex of $\neighbourhood(v_i; D)$.
	
	\item Remove the vertex set of $H_i$.
\end{enumerate}
Then $H = H_1 \cup \dots \cup H_s$.
\end{proof}

The theorem then follows from the last three lemmas.

\subsection{Strict monotone stability}

There is a natural partial order on $[2]^n$, whereby $x \le y$ if and only if $x_v \le y_v$ for all $v$. A function $f \in \functions(n,2)$ is monotone if it preserves that order: $x \le y$ implies $f(x) \le f(y)$. Monotone FDSs have very interesting properties, for instance the celebrated Knaster-Tarski theorem states that the set of fixed points of a monotone FDS forms a non-empty lattice. As such, these FDSs have been studied in different works \cite{Gol85, ARS}.

We denote the set of monotone functions in $\functions[\loops{D}, 2]$ as $\functions_+[\loops{D}, 2]$ and we denote 
$$
	\stability_+[\loops{D}, 2] := \max_{f \in \functions_+[\loops{D},2]} \stability(f).
$$

First of all, we make some important remarks about functions in $\functions_+[\loops{D}, 2]$ with highest stability. Again, such a function satisfies $f_v(x) = x_v$ if $v$ is a source of $D$. For a given $v$ and non-empty $\neighbourhood(v)$, the only monotone coordinate functions $\phi$ and $\psi$ which achieve $|\Xi(\phi)| = |\Xi(\psi)| = 1$ are the following:
\begin{align*}
	\phi(x) &= x_v + \one{ \left( x_v = 0 \right) \land \left( x_{\neighbourhood(v)} = (1, \dots, 1) \right) }\\
		&= x_v \lor \bigwedge_{u \in \neighbourhood(v)} x_u,\\
	\psi(x) &= x_v + \one{ \left( x_v = 1 \right) \land \left( x_{\neighbourhood(v)} = 0 \right) }\\
		&= x_v \land \bigvee_{u \in \neighbourhood(v)} x_u.
\end{align*}
(Obviously, the in-neighbourhoods are with respect to $D$.) Hence for any function $f \in \functions_+[\loops{D}, 2]$ with maximum stability, its local functions are either $\phi$ or $\psi$ as above (if $v$ is not a source of $D$). In that case, we shall write $f_v \equiv \phi$ or $f_v \equiv \psi$, respectively. Moreover, let $\tilde{f}$ be the dual of $f$, defined by $\tilde{f}(x) = \neg f(\neg x)$. Then $\dH(x, f(x)) = \dH(\neg x, \tilde{f}(\neg x))$, hence $\stability(\tilde{f}) = \stability(f)$. Thus, we can always assume that, for a given $v$, $f$ with maximum stability has $f_v \equiv \phi$.

\begin{theorem} \label{th:strict_monotone_stability_2}
For any loopless graph $D$, $\stability_+[\loops{D}, 2] \ge \lfloor n/2 \rfloor$.
\end{theorem}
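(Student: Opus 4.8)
The plan is to mirror the bottom‑up proof of Theorem~\ref{th:strict_stability_2}, replacing every explicit construction by a monotone one assembled from the functions $\phi$ and $\psi$ from the remark preceding the statement; a parity defect in the base case is exactly what weakens $n/2$ to $\lfloor n/2\rfloor$. First I would observe that Lemmas~\ref{lem:A} and \ref{lem:D} have monotone analogues. For Lemma~\ref{lem:A} the appended local functions used there, $g_u=x_u\land x_a$, $g_v=x_v\lor x_a$ for forking and $g_u=x_u\land x_a$, $g_v=x_v\land x_u$ for branching, are already monotone, so starting from a monotone $f\in\functions_+[\loops{H},2]$ of maximum stability one obtains a monotone $g\in\functions_+[\loops{D},2]$ with $\stability_+[\loops{D},2]\ge\stability_+[\loops{H},2]+1$ by the same argument (the value of $x_a$, resp.\ $x_u$, forces one of $u,v$ to agree with the identity). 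For Lemma~\ref{lem:D}, given $H\le D$, I would take an optimal $f\in\functions_+[\loops{H},2]$ whose non‑source local functions are $\phi$ or $\psi$ (possible by the remark) and form $f'\in\functions_+[\loops{D},2]$ by replacing each $\phi$ at $v$ by $x_v\lor\bigwedge_{u\in\neighbourhood(v;D)}x_u$ and each $\psi$ by $x_v\land\bigvee_{u\in\neighbourhood(v;D)}x_u$: enlarging the conjunction/disjunction keeps $f'$ monotone with $G(f')=\loops{D}$, and it only shrinks each set $\Xi$, so $\Delta(x,f'(x))\subseteq\Delta(x,f(x))$ and $\stability(f')\ge\stability(f)$. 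Since Lemmas~\ref{lem:B} and \ref{lem:E} are purely combinatorial and unchanged, everything reduces to out‑cycles.

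The out‑cycle case is the substance of the argument, and the step I expect to be the main obstacle. Let $D$ be an out‑cycle with cycle $A=\{a_1,\dots,a_k\}$ for $k\ge 2$ (the cases $k=1$, a single vertex or a single arc, being immediate), leaf set $C=\{c_1,\dots,c_l\}$, and $B\subseteq A$ the cycle vertices carrying a leaf; every vertex $v$ then has a unique in‑neighbour $p(v)$. I would assign each $v$ a sign $\epsilon_v\in\{0,1\}$ and set $f_v(x)=x_v\lor x_{p(v)}$ when $\epsilon_v=0$ and $f_v(x)=x_v\land x_{p(v)}$ when $\epsilon_v=1$, so that $v\in\Delta(x,f(x))$ exactly when $(x_v,x_{p(v)})=(\epsilon_v,1-\epsilon_v)$. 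Let $D'$ be the Hamiltonian cycle on $V$ from the proof of Lemma~\ref{lem:out-cycle}, obtained by inserting each leaf $c_j$ between $a_{b_j}$ and $a_{b_j+1}$. A short check of the three edge types of $D'$ shows that the two endpoints of an edge cannot both lie in $\Delta(x,f(x))$ as soon as one relation between their signs holds: $\epsilon_{a_i}=\epsilon_{a_{i+1}}$ on a cycle edge $a_ia_{i+1}$, $\epsilon_{a_{b_j}}=\epsilon_{c_j}$ on a parent--leaf edge, and $\epsilon_{c_j}\neq\epsilon_{a_{b_j+1}}$ on a leaf--successor edge. These relations around the cycle $D'$ are simultaneously satisfiable precisely when $l$ is even --- this is the parity defect --- but after deleting one edge of $D'$ one is left with a spanning path, along which the relations can always be met, so I would choose $\epsilon$ accordingly. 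Then $\Delta(x,f(x))$ is, for every $x$, an independent set of a path on $n$ vertices, hence of size at most $\lceil n/2\rceil$, so $\stability(f)\ge n-\lceil n/2\rceil=\lfloor n/2\rfloor$.

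To finish, the out‑cycle case together with the monotone version of Lemma~\ref{lem:A} and with Lemma~\ref{lem:B} gives $\stability_+[\loops{D},2]\ge\lfloor n/2\rfloor$ for every connected co‑functional graph: an out‑cycle on $n_0$ vertices contributes at least $\lfloor n_0/2\rfloor$, and each of the $(n-n_0)/2$ forking or branching operations adds at least $1$, which sums to $\lfloor n_0/2\rfloor+(n-n_0)/2=\lfloor n/2\rfloor$ regardless of the parity of $n_0$. The general co‑functional case then follows by a connected‑component decomposition as in Lemma~\ref{lem:C}, and the general loopless case from the monotone Lemmas~\ref{lem:D} and \ref{lem:E}. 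The genuinely delicate point, and the reason for the floor, is the out‑cycle construction: sacrificing one edge of $D'$ leaves only the bound $\lceil n/2\rceil$, rather than $\lfloor n/2\rfloor$, on the number of incorrectly guessed coordinates.
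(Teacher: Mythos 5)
Your plan coincides with the paper's proof almost lemma for lemma: the fork/branch gadgets of Lemma \ref{lem:A} are indeed already monotone, your version of Lemma \ref{lem:D} (enlarging the conjunction in $\phi$, resp.\ the disjunction in $\psi$, to the bigger in-neighbourhood) is exactly the paper's argument, and in the out-cycle case your sign assignment $\epsilon_v$ with the constraints ``equal across cycle edges and parent--leaf edges, unequal across leaf--successor edges'' is precisely the paper's recursive choice $c_j\equiv a_{b_j}$, $a_{b_j+1}\not\equiv c_j$, $a_{i+1}\equiv a_i$, while your ``delete one edge of $D'$ and satisfy the constraints along the spanning path'' plays the role of the paper's Hamiltonian path $D^*$ (your parity remark makes the origin of the floor more transparent). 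Up to and including connected co-functional graphs, your argument is the paper's.

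The gap is the component-decomposition step, and it is a genuine one --- which the paper itself glosses over by asserting that Lemma \ref{lem:C} ``only involves graph theory''. Strict (monotone) stability is additive over components, so from per-component bounds you only obtain $\sum_i\lfloor n_i/2\rfloor$, which is strictly smaller than $\lfloor n/2\rfloor$ as soon as at least two components have odd order; and the per-component bound cannot be improved in general. Concretely, let $D_3$ be the out-cycle with $k=2$, $l=1$ (a $2$-cycle $a_1a_2$ with a leaf $c_1$ pendant on $a_1$): every $f\in\functions_+[\loops{D_3},2]$ has each local function equal to the conjunction or the disjunction of its two inputs, and checking the eight possibilities shows each such $f$ has a state with two incorrect coordinates, so $\stability_+[\loops{D_3},2]=1=\lfloor 3/2\rfloor$ exactly. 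For the disjoint union of two copies of $D_3$ (so $n=6$), additivity gives strict monotone stability $1+1=2<\lfloor n/2\rfloor=3$, so your final ``connected-component decomposition as in Lemma \ref{lem:C}'' cannot be carried out as stated --- in fact this example shows the inequality itself needs a connectivity hypothesis (or a per-component formulation). Note also that even for connected $D$ the reduction is delicate, since the co-functional $H\le D$ produced by Lemma \ref{lem:E} may be disconnected and may contain several such odd components; this point is not addressed in your proposal (nor in the paper), and it is the one place where the argument, otherwise identical to the paper's, does not go through.
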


\begin{proof}
The structure of the proof is the same as that of Theorem \ref{th:strict_stability_2}.  We first give the analogue of Lemma \ref{lem:out-cycle}.

\begin{claim} \label{claim:out-cycle_monotone}
If $D$ is an out-cycle, then $\stability_+[\loops{D},2] \ge \lfloor n/2 \rfloor$.
\end{claim}

\begin{proof}
We use the same notation as for Lemma \ref{lem:out-cycle}. First of all, if $D$ is a cycle, then the function where $f_v \equiv \phi$ for all vertices $v$ has stability $\lceil n/2 \rceil$ (it was already used in Lemma \ref{lem:out-cycle}). Henceforth, we assume that $D$ is not a cycle. Without loss, let $b_1 = 1$, and then define $f \in \functions_+[\loops{D}, 2]$ recursively as follows. We denote $u \equiv v$ if $f_u \equiv \phi \equiv f_v$ or $f_u \equiv \psi \equiv f_v$.
\begin{align*}
	f_{a_1} 	&\equiv \phi 		&& (f_{a_1}(x) = x_{a_1} \lor x_{a_k})\\
	c_j 		&\equiv a_{b_j} 	&& 1 \le j \le l\\
	a_{b_j + 1} &\not\equiv c_j 	&& 1 \le j \le l\\
	a_{i+1} 	&\equiv a_i 		&& i \le k - 1, a_i \notin B.
\end{align*}
Define the simple graph $D^* = (V, E^*)$ as follows. Its edge set is
$$
	E^* = \{ a_{b_j} c_j, c_j a_{b_j + 1} : 1 \le j \le l, b_j \le k-1 \} \cup \{ a_i a_{i+1} : i \le k-1, a_i \notin B \},
$$
and hence it is a Hamiltonian path. An example of $D^*$ is given in Figure \ref{fig:D'}. Again, we can prove that for any $x \in [2]^n$, $\Delta(x, f(x))$ is an independent set of $D^*$, thus $\stability(f) \ge \lfloor n/2 \rfloor$.
\end{proof}

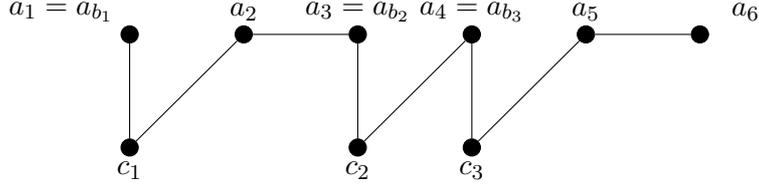
\begin{figure}[ht]
\centering
\begin{tikzpicture}[scale = 1.5, vertex/.style={circle, 
  draw, 
  fill=black,
  inner sep=0.08cm}]
  
  \node [vertex] (a1) at (0,0) {};
  \node [vertex] (a2) at (1,0) {};
  \node [vertex] (a3) at (2,0) {};
  \node [vertex] (a4) at (3,0) {};
  \node [vertex] (a5) at (4,0) {};
  \node [vertex] (a6) at (5,0) {};

	\node (aa1) at (-0.6,0.2) {$a_1 = a_{b_1}$};     	
	\node (aa2) at (1,0.2) {$a_2$};
	\node (aa3) at (2,0.2) {$a_3 = a_{b_2}$};
	\node (aa4) at (3,0.2) {$a_4 = a_{b_3}$};
	\node (aa5) at (4,0.2) {$a_5$};
	\node (aa6) at (5.4,0.2) {$a_6$};
	
	\node [vertex] (c1) at (0,-1) {};
	\node [vertex] (c2) at (2,-1) {};
	\node [vertex] (c3) at (3,-1) {};

	\node (cc1) at (0,-1.2) {$c_1$};
	\node (cc2) at (2,-1.2) {$c_2$};
	\node (cc3) at (3,-1.2) {$c_3$};
	
	\edge{a1}{c1}
	\edge{c1}{a2}
	\edge{a2}{a3}
	\edge{a3}{c2}
	\edge{c2}{a4}
	\edge{a4}{c3}
	\edge{c3}{a5}
	\edge{a5}{a6}
\end{tikzpicture}
\caption{The simple graph $D^*$ corresponding to $D$ in Figure \ref{fig:out-cycle}} \label{fig:D*}
\end{figure}

The analogue of Lemma \ref{lem:A} is proved in the exact same fashion. Lemmas \ref{lem:B} and \ref{lem:C} only involve graph theory and as such we can prove the theorem for co-functional graphs. We now give the analogue of Lemma \ref{lem:D}.
 
\begin{claim} \label{claim:D}
If $H \le D$, then $\stability_+[\loops{H}, 2] \le \stability_+[\loops{D}, 2]$.
\end{claim}

\begin{proof}
Let $f \in \functions_+[\loops{H}, 2]$. Firstly, if $v$ is a source (of $H$ or of $D$, which is equivalent), then $f_v(x) = x_v + \epsilon_v$ for some $\epsilon_v \in [2]$. Secondly, if $u$ is not a source, then $f_u \equiv \phi$ or $f_u \equiv \psi$ for some function $g_u$. We then introduce the function $f' \in \functions_+[\loops{D}, 2]$, defined as
\begin{align*}
	f'_v(x) &= x_v 						&& \text{if } v \text{ is a source},\\
	f'_u &\equiv \begin{cases} 
	\phi & \text{if } f_u \equiv \phi\\
	\psi & \text{if } f_u \equiv \psi
	\end{cases}							&& \text{if } u \text{ is not a source}. 
\end{align*}
(We use the in-neighbourhoods of $\loops{D}$ for $f'$.) Therefore, $\Delta(x, f'(x)) \subseteq \Delta(x, f(x))$ for any $x \in [2]^n$ and $\stability(f') \ge \stability(f)$.
\end{proof}

Finally, combining with Lemma \ref{lem:E}, we prove the theorem for all graphs.
\end{proof}

We now construct an infinite family of loop-full graphs where the strict monotone stability is below $n/2$. Let $m$ be a positive integer, and let $B_m$ be the graph on $n = 2m+1$ vertices with arc set
$$
	E(B_m) = \{(i,i+1) : 1 \le i \le 2m-1 \} \cup \{(2m, 1)\} \cup \{(1,n)\}.
$$
The graph $B_m$ is represented in Figure \ref{fig:Bm}.

\begin{figure}[ht]
\centering
\begin{tikzpicture}[scale = 1.5, vertex/.style={circle, 
  draw, 
  fill=black,
  inner sep=0.08cm}]
  
  \node [vertex] (a1) at (0,0) {};
  \node [vertex] (a2) at (1,0) {};
  \node (a3) at (2,0) {$\cdots$};
  \node [vertex] (a4) at (3,0) {};
  \node [vertex] (a5) at (4,0) {};

	\node (aa1) at (-0.2,0.2) {$1$};     	
	\node (aa2) at (1,0.2) {$2$};
	\node (aa4) at (3,0.2) {$2m-1$};
	\node (aa5) at (4.4,0.2) {$2m$};
	
	\node [vertex] (n) at (0,-1) {};

	\node (nn) at (0,-1.2) {$n$};
	
	\arc{a1}{a2}
	\arc{a2}{a3}
	\arc{a3}{a4}
	\arc{a4}{a5}
	\draw[-latex] (a5) to [bend right] (a1);
	\arc{a1}{c1}
\end{tikzpicture}
\caption{The graph $B_m$.} \label{fig:Bm}
\end{figure}
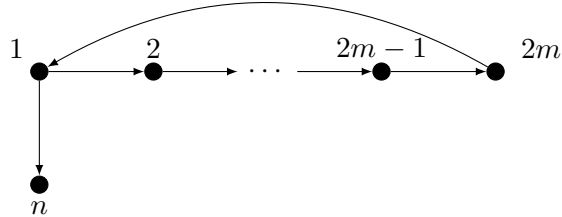

\begin{proposition}
For any $m \ge 1$, $\loops{B_m}$ has strict monotone stability equal to $m$.
\end{proposition}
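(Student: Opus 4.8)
The plan is to prove the two inequalities separately. The lower bound $\stability_+[\loops{B_m}, 2] \ge m$ should be quite routine: $B_m$ is obtained from the cycle $C_{2m}$ on $\{1, \dots, 2m\}$ by appending a single outgoing arc from vertex $1$ to vertex $n$, so $B_m$ is an out-cycle in the sense of Lemma~\ref{lem:out-cycle}. Thus $\stability_+[\loops{B_m},2] \ge \lfloor n/2 \rfloor = \lfloor (2m+1)/2 \rfloor = m$ by Claim~\ref{claim:out-cycle_monotone}. So the content of the proposition is entirely in the upper bound $\stability_+[\loops{B_m},2] \le m$; equivalently, every $f \in \functions_+[\loops{B_m},2]$ has $\instability$-complement $n - \stability(f) \ge m+1$, i.e.\ there is some $x \in [2]^{n}$ with $\dH(x,f(x)) \ge m+1$.

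For the upper bound I would invoke the structural remark preceding Theorem~\ref{th:strict_monotone_stability_2}: it suffices to consider $f$ whose every local function is $\phi$ or $\psi$ (each vertex of $B_m$ is non-source, since $B_m$ has minimum in-degree $1$), and by the self-duality $\stability(\tilde f) = \stability(f)$ we may assume $f_1 \equiv \phi$. Write $f_v \equiv \phi$ or $f_v \equiv \psi$ accordingly; this is a $2$-colouring of the $n$ vertices. The key is to analyse, for a state $x$, the set $J = \Delta(x,f(x))$. On the long cycle portion $1 \to 2 \to \cdots \to 2m \to 1$, vertex $i+1$ has a unique in-neighbour $i$ (besides its loop), so $f_{i+1} \equiv \phi$ means $i+1 \in J$ iff $(x_{i+1},x_i) = (0,1)$, while $f_{i+1} \equiv \psi$ means $i+1 \in J$ iff $(x_{i+1},x_i) = (1,0)$. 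Vertex $1$ is special: it has in-neighbours $2m$ and its loop, so with $f_1 \equiv \phi$, $1 \in J$ iff $(x_1, x_{2m}) = (0,1)$. Vertex $n$ has in-neighbour $1$ and its loop. The idea is: given any colouring, to build $x$ that forces $\dH \ge m+1$, one walks around the $2m$-cycle choosing $x_1, x_2, \dots, x_{2m}$ greedily so as to put as many of $\{1,\dots,2m\}$ into $J$ as possible, then sets $x_n$ to put $n$ into $J$ as well. On a cycle with these "matching-type" constraints the maximal "bad set" one can force is exactly $m$ among the $2m$ cycle vertices (this is the independent-set-in-Hamiltonian-cycle phenomenon from Lemma~\ref{lem:out-cycle}, but now read as a lower bound on what the \emph{adversary} can achieve), and then vertex $n$ adds one more, giving $m+1$.

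The main obstacle — and the place that needs genuine care rather than hand-waving — is showing that one can \emph{simultaneously} force $m$ of the cycle vertices into $J$ \emph{and} force $n \in J$, for every possible $\phi/\psi$-colouring, including the parity bookkeeping around the odd-looking vertex $1$. Concretely: forcing $n \in J$ pins down $x_1$ (to $0$ if $f_n \equiv \phi$, to $1$ if $f_n \equiv \psi$), and forcing $1 \in J$ pins down the pair $(x_1,x_{2m})$; these two demands interact, and in some colourings forcing $n$ may cost us the ability to also force vertex $1$. So I would argue by a short case analysis on the colours of the "boundary" vertices $1, 2, 2m, n$: in each case exhibit an explicit choice of $x_1$ and then propagate a choice of $x_2,\dots,x_{2m}$ around the even cycle that achieves $m$ hits among $\{2,\dots,2m\} \cup \{1\}$ while keeping $x_1$ at the value demanded by $f_n$, so that $n \in J$ too. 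The counting fact I would isolate as a sub-lemma is: for the cycle $2 \to 3 \to \cdots \to 2m \to 1 \to 2$ with a prescribed value of one coordinate ($x_1$) and the per-arc $\phi/\psi$ constraints above, there is always an assignment of the remaining coordinates making at least $m$ of the $2m$ vertices belong to $J$; this is essentially the statement that the "defect" version of the independent-set bound is tight, and it follows by walking around and making the locally greedy choice, with the single wrap-around constraint costing at most the parity slack that the odd total length $2m$ conveniently absorbs. Once that sub-lemma is in hand, tacking on vertex $n$ is immediate, and combining with the out-cycle lower bound yields equality.
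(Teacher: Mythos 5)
Your lower bound is fine ($B_m$ is an out-cycle, so Claim~\ref{claim:out-cycle_monotone} or Theorem~\ref{th:strict_monotone_stability_2} gives $\stability_+[\loops{B_m},2]\ge m$), and the reduction of the upper bound to $\phi/\psi$-colourings with one colour normalised by duality is exactly how the paper proceeds. The gap is in your key sub-lemma. You claim that, with $x_1$ pinned to the value demanded by $f_n$, one can always force at least $m$ of the $2m$ cycle vertices into $J=\Delta(x,f(x))$, so that $n$ can always be added for a total of $m+1$. This is false. Take $f_1\equiv\phi$ (your normalisation), $f_n\equiv\phi$, and $f_v\equiv\psi$ for every $v\in\{2,\dots,2m\}$. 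Forcing $n\in J$ requires $x_1=1$; but then vertex $1$ (which needs $x_1=0$, $x_{2m}=1$) and vertex $2$ (which needs $x_2=1$, $x_1=0$) are both excluded, and each $v\in\{3,\dots,2m\}$ lies in $J$ iff $(x_{v-1},x_v)=(0,1)$, so the members of $J$ among $\{3,\dots,2m\}$ use pairwise disjoint consecutive pairs of the $2m-1$ coordinates $x_2,\dots,x_{2m}$ and number at most $\lfloor(2m-1)/2\rfloor=m-1$. Hence any $x$ with $n\in J$ has $\dH(x,f(x))\le m$, and your plan of ``$m$ cycle hits plus $n$'' cannot reach $m+1$ for this colouring.

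What saves the proposition in such colourings is precisely what your framing rules out: you assert that at most $m$ of the $2m$ cycle vertices can ever be simultaneously wrong (the ``independent-set-in-Hamiltonian-cycle phenomenon''), but that only holds for constant colourings; when adjacent cycle vertices carry different colours they can both be wrong at once. In the example above, $x=(0,1,0,1,\dots,0,1)$ on the cycle makes vertices $1,2,4,\dots,2m$ all wrong, i.e.\ $m+1$ discrepancies without using $n$ at all. This is exactly the paper's Case 3.1 (and its Case 3.2.2 likewise dispenses with $n$); the paper's case analysis branches on $\epsilon_1$, $\epsilon_2$ and the first colour change along the cycle, sometimes collecting $m$ cycle vertices plus $n$ and sometimes $m+1$ cycle vertices alone. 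So your argument needs to be restructured along those lines: the upper-bound construction cannot uniformly insist on $n\in J$, and the counting sub-lemma must be replaced by a dichotomy (either the colouring lets you pin $x_1$ for $n$ and still harvest $m$ cycle vertices, or it contains a colour change that yields $m+1$ wrong vertices inside the cycle).
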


\begin{proof}
The case $m=1$ is easily verified, thus we assume $m \ge 2$ henceforth. Let $f \in \functions_+[\loops{B_m},2]$ have maximum stability, then every local function $f_v$ is of the form 
$$
	f_v(x) = x_v + \one{ (x_v,x_u) = (\epsilon_v, \epsilon_v + 1) }.
$$
Therefore, $f$ is entirely described by $\epsilon = (\epsilon_1, \dots, \epsilon_n)$. Our aim is to exhibit a state $y \in [2]^n$ such that $\dH(y, f(y)) \ge m+1$. Without loss of generality, let $f_n = \phi_n$, i.e. $\epsilon_n = 0$. We now proceed to a case analysis.

\begin{description}
	\item[Case 1: $\epsilon_2 = 0$.] ~\\
	Let $y$ such that
	$(y_{2k}, y_{2k-1}) = (\epsilon_{2k}, \epsilon_{2k} + 1)$ for all $1 \le k \le m$ and $y_n = 0$.
	Then $\Delta(y, f(y)) = \{2, 4, \dots, 2m, n\}$.

	\item[Case 2: $\epsilon_1 = 1$.] ~\\
	Let $y$ such that
	$(y_{2k+1}, y_{2k}) = (\epsilon_{2k+1}, \epsilon_{2k+1} + 1)$ for all $1 \le k \le m-1$ and $(y_n, y_1, y_{2m}) = (0,1,0)$. Then $\Delta(y, f(y)) = \{1, 3, 5, \dots, 2m-1, n\}$.

\item[Case 3: $\epsilon_1 = 0$ and $\epsilon_2 = 1$.] ~
	\begin{description}
		\item[Case 3.1: For all $i$ from $3$ to $2m$, $\epsilon_i = 1$.] ~\\
		Let $y = (0,1,0,1,\dots,0,1,0)$. Then $\Delta(y, f(y)) = \{1, 2, 4, 6, \dots, 2m\}$.
	
		\item[Case 3.2.: There exists $3 \le i \le 2m$ with $\epsilon_i = 0$.]~\\
		Let  $j := \min\{ 3 \le i \le 2m : \epsilon_j = 0\}$.
		\begin{description}
			\item[Case 3.2.1: $j$ is even.] ~\\
			Let $y$ such that
			$(y_{2k+1}, y_{2k}) = (\epsilon_{2k+1}, \epsilon_{2k+1} + 1)$ for all $1 \le k \le j/2 - 1$, $(y_{2l}, y_{2l-1}) = (\epsilon_{2l}, \epsilon_{2l} + 1)$ for all $j/2 \le l \le m$, and $(y_n, y_1) = (0,1)$. Then $\Delta(y, f(y)) = \{ 3, 5, \dots, j-1, j, j+2, \dots, 2m, n\}$.
			
			\item[Case 3.2.2: $j$ is odd.] ~\\
			Let $y$ such that
			$(y_{2k}, y_{2k-1}) = (1,0)$ for all $1 \le k \le (j-1)/2$, $(y_{2l+1}, y_{2l}) = (\epsilon_{2l+1}, \epsilon_{2l+1} + 1)$ for all $(j-1)/2 \le l \le m-1$, and $(y_{2m}, y_n) = (1,0)$. Then $\Delta(y, f(y)) = \{ 1, 2, 4, \dots, j-1, j, j+2, \dots, 2m-1\}$.
		\end{description}
	\end{description}
\end{description}
\end{proof}

We are now interested in how the monotone stability compares with the stability. Clearly, the following are equivalent: $D$ is the empty graph, $\stability[\loops{D}, 2] = n$ and $\stability_+[\loops{D}, 2] = n$. Once we exclude the empty graph, the two types of stabilities can be as far apart as Theorem \ref{th:strict_monotone_stability_2} allows. The balanced complete bipartite graph $K_{m,m}$ has vertex set $V = L \cup R$, where $L = \{l_1, \dots, l_m\}$ and $R = \{r_1, \dots, r_m\}$, and all possible edges between $L$ and $R$: $E = \{l_i r_j : l_i \in L, r_j \in R\}$.

\begin{proposition}
We have $\stability[\loops{K_{m,m}}, 2] = 2m-1$ while $\stability_+[\loops{K_{m,m}}, 2] = m$.
\end{proposition}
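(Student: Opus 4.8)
The second equality follows almost immediately from results already established. By Theorem~\ref{th:strict_monotone_stability_2} we have $\stability_+[\loops{K_{m,m}}, 2] \ge \lfloor n/2 \rfloor = m$ (here $n = 2m$). For the matching upper bound, take any $f \in \functions_+[\loops{K_{m,m}}, 2]$ of maximum stability; as noted in the discussion preceding the theorem, each local function is either $\phi$ or $\psi$ (no vertex is a source). Since the graph is vertex-transitive under swapping $L$ and $R$ and the duality $f \mapsto \tilde f$ preserves stability while swapping the roles of $\phi$ and $\psi$, I may assume some fixed vertex, say $l_1$, has $f_{l_1} \equiv \phi$. I then exhibit a single state $x$ on which at least $m$ coordinates change. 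The key observation is that $\phi$ (an OR with the conjunction of the in-neighbours) fails at $x_v$ exactly when $x_v = 0$ and all in-neighbours are $1$, while $\psi$ fails exactly when $x_v = 1$ and all in-neighbours are $0$. Since every vertex in $L$ sees exactly $R$ and vice versa, once I decide the common pattern on $R$ (all $0$ or all $1$) I can simultaneously flip either all $\phi$-vertices or all $\psi$-vertices of $L$, and likewise on $R$; a short case analysis on whether the majority type in $L$ and in $R$ is $\phi$ or $\psi$ produces a state with $\ge m$ wrong guesses, giving $\stability_+ \le m$.

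For the first equality, the upper bound $\stability[\loops{K_{m,m}}, 2] \le 2m-1$ is just the observation that $\loops{K_{m,m}}$ is not the empty graph, so stability $n$ is impossible (any $f$ with $G(f) = \loops{K_{m,m}}$ must have $f_{l_1}$ depend on some $x_{r_j}$, forcing a wrong guess somewhere). The interesting direction is the lower bound $\stability[\loops{K_{m,m}}, 2] \ge 2m-1$: I must construct $f \in \functions[\loops{K_{m,m}}, 2]$ such that for every $x \in [2]^{2m}$, at most one coordinate of $x$ differs from $f(x)$. I will use functions of the ``indicator'' shape: $f_v(x) = x_v + \one{x_{\neighbourhood(v)} = c_v}$ for a carefully chosen ``trigger'' pattern $c_v \in [2]^m$ on the opposite side. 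Writing $L$-patterns and $R$-patterns as elements of $[2]^m$, vertex $l_i$ guesses wrong on $x$ iff $x_R = c_{l_i}$, and vertex $r_j$ guesses wrong iff $x_L = c_{r_j}$. So I need: (i) the $c_{l_i}$ are pairwise distinct (so at most one $l_i$ is ever wrong, for any fixed $x_R$), (ii) the $c_{r_j}$ are pairwise distinct, and (iii) there is no pair $(x_L, x_R)$ with $x_R = c_{l_i}$ and $x_L = c_{r_j}$ simultaneously, i.e. the ``graph'' $\{(c_{r_j}, j)\}$ and $\{(i, c_{l_i})\}$ as subsets of $[2]^m \times [2]^m$ are disjoint — equivalently, $c_{l_i} \ne c_{r_j}$ is not enough; rather I need that $x_R = c_{l_i}$ forces $x_L \ne c_{r_j}$ for all $j$. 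Concretely I can take $c_{l_i} = e_i$ (the $i$-th unit vector) for $i = 1,\dots,m$, and $c_{r_j} = \mathbf{0}$ for all $j$... but that violates (ii). So instead: choose all $c_{l_i}$ among the $m$ weight-one vectors and all $c_{r_j}$ among, say, the all-ones vector and weight-$(m-1)$ vectors, ensuring every $c_{l_i}$ has weight $1$ and every $c_{r_j}$ has weight $\ge m-1 \ge 2$ (using $m \ge 3$; small cases checked separately), so the two families are disjoint as sets and moreover no $x$ can match some $c_{l_i}$ on $R$ and some $c_{r_j}$ on $L$ unless those two triggers are "compatible" — and I will pick them so that the fibers do not overlap.

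The main obstacle is getting condition (iii) right: I need to guarantee that the set of $x$ on which some $l_i$ is wrong is \emph{disjoint} from the set on which some $r_j$ is wrong, not merely that no single $x$ makes two vertices on the same side wrong. The cleanest route is to reserve the value $x_R = \mathbf{1}$ and $x_L = \mathbf{1}$: set $c_{l_i} = e_i$ for $i=1,\dots,m$ and $c_{r_j} = \mathbf{1}$ for $j = 1, \dots, m$ — no wait, that again fails distinctness among the $c_{r_j}$. The genuine fix is that the roles of $L$ and $R$ are symmetric only in the graph, not in the construction, so I should instead put all $c_{l_i}$ equal to distinct weight-$1$ vectors and, exploiting that $r_j$ being wrong requires $x_L$ to equal a specific pattern, choose the $c_{r_j}$ to be distinct patterns \emph{each of which has weight different from $1$} (e.g. weights $0, 2, 3, \dots$), which is possible once $2^m \ge m+1$ gives enough room; then if $x_R = c_{l_i}$ (forcing $|x_R| = 1$) the state $x$ can still have $x_L = c_{r_j}$ for some $j$, so I additionally need the $c_{r_j}$ chosen so that \emph{no} weight-$1$ vector equals any $c_{r_j}$ AND... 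I will instead argue more carefully by taking $c_{l_i}$ to range over all vectors of weight $\le 1$ that are \emph{not} $\mathbf{0}$ and $c_{r_j}$ over patterns forced to collide only with $x_L = \mathbf{0}$, handled by a direct incidence count: the number of "bad" states for $L$-vertices is $m \cdot 2^m$ (choices of $x_L$) and I must keep these disjoint from the $m \cdot 2^m$ bad states for $R$; since the total is $2 \cdot 2^m \le 2^{2m}$ for $m \ge 1$, a counting/greedy or explicit algebraic choice (e.g. $c_{l_i}$ depending only on making $x_R$ lie in a fixed $m$-element set $P \subseteq [2]^m$, and $c_{r_j}$ making $x_L$ lie in a fixed set $Q$ with $P$-coordinates "incompatible" with $Q$) will work. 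I will present the explicit construction and verify the three conditions by a short direct check, treating $m \le 2$ by hand.
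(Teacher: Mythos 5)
Your monotone half is essentially the paper's own argument: the lower bound $\stability_+[\loops{K_{m,m}},2]\ge m$ comes from Theorem \ref{th:strict_monotone_stability_2}, and for the upper bound one evaluates any optimal $f$ (all local functions $\phi$ or $\psi$) at the two states $(x_L,x_R)=(\mathbf{0},\mathbf{1})$ and $(\mathbf{1},\mathbf{0})$; the numbers of wrong guesses at these two states sum to $2m$, so one of them is at least $m$. Phrase it via this sum rather than ``majority type'', but the idea is the same and correct. Your observation that $\stability[\loops{K_{m,m}},2]\le 2m-1$ because some $f_{l_1}$ must err somewhere is also fine.

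The genuine gap is the construction for $\stability[\loops{K_{m,m}},2]\ge 2m-1$. You commit to local functions of the shape $f_{l_i}(x)=x_{l_i}+\one{x_R=c_{l_i}}$ and $f_{r_j}(x)=x_{r_j}+\one{x_L=c_{r_j}}$, i.e.\ triggers depending only on the opposite side. Within that shape your condition (iii) can \emph{never} be satisfied: the set of states at which some $L$-vertex errs consists of all $x$ with $x_R\in\{c_{l_1},\dots,c_{l_m}\}$ and $x_L$ arbitrary, while the set at which some $R$-vertex errs consists of all $x$ with $x_L\in\{c_{r_1},\dots,c_{r_m}\}$ and $x_R$ arbitrary; these are two product sets whose factors are nonempty, so they intersect in exactly the $m^2$ states with $x_L=c_{r_j}$, $x_R=c_{l_i}$, no matter how the triggers are chosen. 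Hence every such $f$ has a state with at least two wrong guesses and stability at most $2m-2$; no counting, greedy or algebraic selection of the $c_v$ can repair this, so the plan as written fails. The missing idea is to exploit the loop and let the trigger also constrain the vertex's \emph{own} coordinate. The paper's construction makes $l_i$ err exactly when $x_{l_i}=1$, $x_{r_i}=1$ and $x_{R\setminus r_i}=(0,\dots,0)$, and $r_j$ err exactly when $x_{r_j}=1$, $x_{l_j}=0$ and $x_{L\setminus l_j}=(1,\dots,1)$. Distinct opposite-side patterns still give at most one erring vertex per side, and cross-disjointness now follows: if $l_i$ errs then $x_R$ is the indicator vector of $r_i$, so the only possible erring right-vertex is $r_i$, which would need $x_{l_i}=0$, contradicting $x_{l_i}=1$. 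Thus at most one coordinate changes at any state and $\stability(f)=2m-1$. You need this own-value asymmetry (or something equivalent) for the lower bound to go through.
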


\begin{proof}
Firstly, let $f \in \functions[\loops{K_{n,n}}, 2]$ be defined as
\begin{align*}
	f_{l_i}(x) &= x_{l_i} + \one{ \left( x_{l_i} = 1 \right) \land \left( x_{r_i} = 1 \right) \land \left( x_{R \setminus r_i} = (0,\dots,0) \right) }\\
	f_{r_j}(x) &= x_{r_j} + \one{ \left( x_{r_j} = 1 \right) \land \left( x_{l_j} = 0 \right) \land \left( x_{L \setminus l_j} = (1, \dots, 1) \right) }.
\end{align*}
We now prove that its stability is $2m-1$. If $f_{l_i}(x) \ne x_{l_i}$, then
\begin{itemize}
	\item $x_{l_i} = 1$, hence $f_{r_i}(x) = x_{r_i}$,
	\item $x_{r_i} = 1$ hence $f_{l_k}(x) = x_{l_k}$ for all $k \ne i$,
	\item $x_{R \setminus r_i} = (0, \dots, 0)$ hence $f_{r_k}(x) = x_{r_k}$ for all $k \ne i$.
\end{itemize}
Similarly, if $f_{r_j}(x) \ne x_{r_j}$, then
\begin{itemize}
	\item $x_{r_j} = 1$ hence $f_{l_k}(x) = x_{l_k}$ for all $k \ne j$,
	\item $x_{l_j} = 0$ hence $f_{l_j}(x) = x_{l_j}$ and $f_{r_k}(x) = x_{r_k}$ for all $k \ne j$.
\end{itemize}

Now let $f \in \functions_+[\loops{K_{m,m}}, 2]$ with maximum stability and
\begin{align*}
	A &:= |\{ i : f_{l_i} \equiv \phi\}| + |\{ j : f_{r_j} \equiv \psi\}|\\
	B &:= |\{ i : f_{l_i} \equiv \psi\}| + |\{ j : f_{r_j} \equiv \phi\}|.
\end{align*}
We then have $A + B = 2m$, hence one of the two, say $A$, satisfies $A \ge m$ (the proof is similar if $B \ge m$ instead). Then let $y \in [2]^{2m}$ such that $y_L = (0, \dots, 0)$ and $y_R = (1, \dots, 1)$, then it is clear that $\dH(y, f(y)) = A \ge m$ and hence $\stability(f) \le m$.
\end{proof}

On the other hand, the monotone strict stability can also reach $n-1$, for instance for $D = K_n$; we characterise such graphs below. A biclique is the complement of a bipartite graph. Let us call a loopless graph $D$ a \textbf{near-biclique} if its vertex set can be partitioned into $V = A \cup B \cup S$, where $S$ is the set of sources of $D$, and
\begin{enumerate}
	\item \label{it:1} for all $a_1, a_2 \in A$, either $(a_1, a_2) \in E$ or $(a_2, a_1) \in E$,
	
	\item \label{it:2} similarly for all $b_1, b_2 \in B$, either $(b_1, b_2) \in E$ or $(b_2, b_1) \in E$,
	
	\item \label{it:3} for any $u, v \in V \setminus S$, there exists $w \in (\neighbourhood(u) \cup u) \cap (\neighbourhood(v) \cup v)$,
	
	\item \label{it:4} for any $a \in A$ and $b \in B$, there exists $w \in \neighbourhood(a) \cap \neighbourhood(b)$.
\end{enumerate}

\begin{theorem}
We have $\stability_+[\loops{D}, 2] = n-1$ if and only if $D$ is a non-empty near-biclique.
\end{theorem}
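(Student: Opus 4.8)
The plan mirrors the analysis of $\stability_+[\loops{D},2]$ in the preceding results: reduce to functions of a canonical shape, then extract the combinatorics. By the remarks preceding Theorem~\ref{th:strict_monotone_stability_2}, we may take a function $f\in\functions_+[\loops{D},2]$ attaining $\stability_+[\loops{D},2]$ with $f_v=\id$ for every source $v$ and $f_v\equiv\phi$ or $f_v\equiv\psi$ for every non-source $v$. Since a non-source coordinate of $\loops{D}$ depends essentially on some vertex other than itself, $f_v\ne\id$ there, so the coordinates on which $f$ is the identity are exactly the sources $S$, and $f$ is encoded by the partition $V\setminus S=A\sqcup B$ into its $\phi$-coordinates and $\psi$-coordinates. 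Recalling that $\stability(g)=n$ iff $g=\id$, we have $\stability_+[\loops{\emptyset},2]=n$, while $\stability_+[\loops{D},2]\le n-1$ as soon as $D$ is non-empty. So what remains is to show that a non-empty $D$ satisfies $\stability_+[\loops{D},2]=n-1$ precisely when some canonical $f$ has $\dH(x,f(x))\le 1$ for all $x$, and that this happens precisely when $D$ is a near-biclique.

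The heart of the argument is a local characterisation: a canonical $f$ with $\phi$-part $A$ and $\psi$-part $B$ satisfies $\stability(f)\ge n-1$ if and only if (a) any two distinct vertices of $A$ are joined by an arc of $D$; (b) any two distinct vertices of $B$ are joined by an arc of $D$; and (c) $\neighbourhood(a;D)\cap\neighbourhood(b;D)\ne\emptyset$ for all $a\in A$ and $b\in B$. The condition $\stability(f)\ge n-1$ means $\dH(x,f(x))\le 1$ for all $x$, that is, no two non-source coordinates ever belong to $\Delta(x,f(x))$ together (sources never do). A $\phi$-coordinate $v$ belongs to $\Delta(x,f(x))$ exactly when $x_v=0$ and $x_u=1$ for all $u\in\neighbourhood(v)$; a $\psi$-coordinate $v$ exactly when $x_v=1$ and $x_u=0$ for all $u\in\neighbourhood(v)$. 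For each of the three possible types of pair $\{u,v\}$ one then checks when the two resulting systems of coordinate constraints are jointly satisfiable, the only obstruction being a vertex forced to take both values $0$ and $1$. Two $\phi$-coordinates, and dually two $\psi$-coordinates, are simultaneously in $\Delta$ at some $x$ iff there is no arc between them (the only possible clash is $u\in\neighbourhood(v)$ or $v\in\neighbourhood(u)$, since $D$ is loopless). A $\phi$-coordinate $u$ and a $\psi$-coordinate $v$ are simultaneously in $\Delta$ at some $x$ iff $\neighbourhood(u)\cap\neighbourhood(v)=\emptyset$, the only possible clash being a common in-neighbour; note that, in contrast to the same-type cases, an arc between $u$ and $v$ causes no clash. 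Negating these three equivalences gives exactly (a), (b), (c).

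Granting the local characterisation, the theorem follows quickly. For the ``if'' direction, let $D$ be a non-empty near-biclique with partition $V=A\cup B\cup S$, and define $f$ by $f_v=\id$ on $S$, $f_v\equiv\phi$ on $A$, and $f_v\equiv\psi$ on $B$; this $f$ is monotone with $G(f)=\loops{D}$, and conditions \ref{it:1}, \ref{it:2} and \ref{it:4} of the near-biclique are precisely (a), (b) and (c), so $\stability(f)\ge n-1$ and hence $\stability_+[\loops{D},2]=n-1$. For the ``only if'' direction, $D$ must be non-empty (otherwise the stability equals $n$); take a canonical $f$ attaining $\stability_+[\loops{D},2]=n-1$, with $\phi$-part $A$ and $\psi$-part $B$, so that $V=A\cup B\cup S$ with $S$ the set of sources, and apply the local characterisation to get (a), (b), (c), that is, conditions \ref{it:1}, \ref{it:2} and \ref{it:4}. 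Condition \ref{it:3} is then automatic: for distinct non-sources $u,v$ lying in the same part, \ref{it:1} or \ref{it:2} gives an arc between them, whose tail lies in $(\neighbourhood(u)\cup u)\cap(\neighbourhood(v)\cup v)$; and if they lie in different parts, \ref{it:4} gives a common in-neighbour in that set. Hence $D$ is a non-empty near-biclique.

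The step I expect to be the main obstacle is the local characterisation, and within it the $\phi$--$\psi$ case: one must notice that, unlike two coordinates of the same orientation, a $\phi$-coordinate and a $\psi$-coordinate are not kept apart by an arc between them, so the relation forced between $A$ and $B$ is the existence of a common in-neighbour (condition \ref{it:4}) rather than an arc. A secondary, clarifying point is that condition \ref{it:3} of the near-biclique definition is redundant given the other three together with the requirement that $S$ be the set of sources; this is what lets the ``only if'' direction close without extra work.
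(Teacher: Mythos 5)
Your proposal is correct and follows essentially the same route as the paper: restrict to the canonical maximisers whose non-source local functions are $\phi$ or $\psi$, and characterise stability $n-1$ by deciding, pair by pair, when two coordinates can be simultaneously incorrect, which yields exactly conditions \ref{it:1}, \ref{it:2} and \ref{it:4}. The only cosmetic difference is that the paper obtains condition \ref{it:3} from $\sigma(D)=1$, whereas you observe it is already implied by \ref{it:1}, \ref{it:2} and \ref{it:4} together with $S$ being the set of sources; both are valid.
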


\begin{proof}
All in-neighbourhoods are with respect to $D$. If $D$ is a near-biclique with $A$ and $B$ as above, let $f \in \functions[\loops{D}, 2]$ such that $f_s(x) = x_s$ for all $s \in S$, $f_a \equiv \phi$ for all $a \in A$ and $f_b \equiv \psi$ for all $b \in B$. We now prove that $\stability(f) = n-1$. Suppose that $f_a(x) \ne x_a$ for some $a \in A$ (the proof is similar if $f_b(x) \ne x_b$ for some $b \in B$), then $x_a = 0$ and $x_{\neighbourhood(a)} = (1, \dots, 1)$. For any other $a' \in A$, we have either $(a, a') \in E$ in which case $x_{\neighbourhood(a')} \ne (1, \dots, 1)$, or $(a', a) \in E$ in which case $x_{a'} = 1 \ne 0$; in any case, we have $f_{a'}(x) = x_{a'}$. Now let $b \in B$ and $w \in \neighbourhood(a) \cap \neighbourhood(b)$, then $x_w = 1$, hence $x_{\neighbourhood(b)} \ne (0,\dots,0)$ and $f_b(x) = x_b$.

Conversely, let $f \in \functions_+[\loops{D}, 2]$ with stability $n-1$; we will prove that $D$ must be a near-biclique. Firstly, suppose that $u$ and $v$ are two non-source vertices which form an independent set of size two, then it is easy to check that $f_u \equiv \phi$ and $f_v \equiv \psi$ (or vice versa). Therefore, if $G$ is the simple graph on $V \setminus S$ where $u$ and $v$ are adjacent if and only if they form an independent set in $D$, we see that $G$ must be bipartite. This is equivalent to Properties \ref{it:1} and \ref{it:2}, with $A$ and $B$ forming the bipartition of $G$. Say $f_a \equiv \phi$ for all $a \in A$ and $f_b \equiv \psi$ for all $b \in B$. Secondly, we must have $\sigma(D) = 1$, which ensures that $D$ satisfies Property \ref{it:3}. Finally, suppose there exist $a \in A$ and $b \in B$ with $\neighbourhood(a) \cap \neighbourhood(b) = \emptyset$, then let $x \in [2]^n$ such that $x_a = 0$, $x_{\neighbourhood(a)} = (1, \dots, 1)$, $x_b = 1$, $x_{\neighbourhood(b)} = (0, \dots, 0)$; we see that $f_a(x) \ne x_a$ and $f_b(x) \ne x_b$. Therefore, $D$ must satisfy Property \ref{it:4}.
\end{proof}

\section{Average strict stability and instability} \label{sec:average_i}

\subsection{Average strict stability}

We are first interested in the average value of the stability. The proportion of FDS in $\functions(n,q)$ with positive stability decreases rapidly with $q$. For any set $S \subseteq [q]^n$, the number of functions $f$ in $\functions(n,q)$ such that $x$ and $f(x)$ are at Hamming distance $n$ for all $x \in S$ is $(q-1)^{n|S|} \cdot q^{n(q^n - |S|)}$. We obtain
$$
	|\{ f \in \functions(n,q) : \stability(f) > 0 \}| = q^{nq^n} \left[ 1 - \left( 1 - \frac{1}{q} \right)^n \right]^{q^n}.
$$

Unsurprisingly, the average stability for a given interaction graph tends to zero when $q$ tends to infinity.

\begin{proposition} \label{prop:avg_stability}
For any $D$ and any $q$,
$$
	\frac{1}{|\functions[D,q]|} \sum_{f \in \functions[D,q]} \stability(f) \le \frac{n}{q}.
$$
\end{proposition}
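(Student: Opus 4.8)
The plan is to bound the average of $\stability(f)$ by linearity of expectation over the $n$ coordinates, reducing the problem to a single-coordinate estimate. Write
\[
	\sum_{f \in \functions[D,q]} \stability(f) \le \sum_{f \in \functions[D,q]} \left( n - \dH(x^f, f(x^f)) \right)
\]
for a suitably chosen $x^f$; more cleanly, I would instead fix a single state, say $x = (0,\dots,0)$, and use $\stability(f) \le n - \dH(\mathbf{0}, f(\mathbf{0})) = \sum_{v=1}^n \one{f_v(\mathbf{0}) = 0}$. Then
\[
	\sum_{f \in \functions[D,q]} \stability(f) \le \sum_{v=1}^n \, \big|\{ f \in \functions[D,q] : f_v(\mathbf{0}) = 0 \}\big|.
\]
So it suffices to show that for each vertex $v$, the fraction of $f \in \functions[D,q]$ with $f_v(\mathbf{0}) = 0$ is at most $1/q$; summing over $v$ then gives the bound $n/q$.

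The key step is therefore the single-coordinate count. I would argue by a symmetry/bijection on the fibre: partition $\functions[D,q]$ according to the values of all local functions $f_u$ for $u \ne v$ and the values of $f_v$ on all inputs $x_{\neighbourhood(v)} \ne \mathbf{0}$, leaving only the single value $f_v(\mathbf{0}_{\neighbourhood(v)}) \in [q]$ free. Within each such class, cycling $f_v(\mathbf{0}_{\neighbourhood(v)})$ through all $q$ values of $[q]$ gives a group action of $\mathbb{Z}/q$ that is free, and crucially it preserves membership in $\functions[D,q]$: changing the value of $f_v$ at one single input cannot destroy an essential dependence of $f_v$ on any $x_u$ with $u \in \neighbourhood(v)$ — wait, that is the subtle point, since it could in principle make $f_v$ lose a dependence. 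The cleaner route: note that $f \in \functions[D,q]$ iff $f \in \functions(D,q)$ and each $f_v$ essentially depends on every in-neighbour; the constraint "$f_v(\mathbf{0})=0$" involves only one input, so among the (at least one) values making $f_v(\mathbf{0})$ equal to each fixed element of $[q]$, the number that keep $f$ in $\functions[D,q]$ is the same for the value $0$ as it is on average, because relabelling the alphabet by adding a constant to the $v$-th output coordinate is a bijection of $\functions[D,q]$ onto itself that maps $\{f : f_v(\mathbf 0) = 0\}$ onto $\{f : f_v(\mathbf 0) = c\}$ for each $c$. Hence these $q$ sets partition $\functions[D,q]$ into equal parts, giving exactly the $1/q$ fraction.

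The main obstacle I anticipate is making precise that translating one output coordinate, $f_v \mapsto f_v + c$, is genuinely an automorphism of $\functions[D,q]$ — i.e. that it preserves the \emph{exact} interaction graph and not merely $G(f) \subseteq D$. This is immediate: adding a constant to $f_v$ changes neither which variables $f_v$ essentially depends on (since $f_v(x) + c \ne f_v(y) + c \iff f_v(x) \ne f_v(y)$) nor any other local function, so $G(f+c e_v) = G(f)$. With that observation in hand the $\mathbb{Z}/q$-action on $f_v(\mathbf 0)$ is free and $\functions[D,q]$-preserving, the fibre argument goes through verbatim, and we conclude
\[
	\frac{1}{|\functions[D,q]|}\sum_{f \in \functions[D,q]} \stability(f) \le \frac{1}{|\functions[D,q]|}\sum_{v=1}^n \frac{|\functions[D,q]|}{q} = \frac{n}{q}.
\]
A remark worth including: the same argument shows the average of $n - \dH(x, f(x))$ is at most $n/q$ for \emph{any} fixed $x$, and in fact equals $n/q$ when $D$ has no isolated sources, so the bound is essentially tight.
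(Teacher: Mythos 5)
Your proof is correct and takes essentially the same route as the paper: both fix a single state $y$ and show that the average of $n-\dH(y,f(y))$ over $\functions[D,q]$ is $n/q$, which the paper does by asserting that $\dH(y,f(y))$ follows the binomial law $\binom{n}{d}(q-1)^d q^{-n}$, while you justify that same uniformity explicitly (and correctly) via the coordinate-translation bijection $f_v \mapsto f_v + c$, which preserves the exact interaction graph. Only your closing side remark needs adjusting: the average of $n - \dH(x,f(x))$ equals $n/q$ for \emph{every} $D$, sources included, since a source's constant local function is itself translated uniformly over $[q]$.
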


\begin{proof}
Let $y \in [q]^n$. Since $\stability(f) \le n - \dH(y, f(y))$, we have
\begin{align*}
	\sum_{f \in \functions[D,q]} \stability(f) &\le \sum_{d=0}^n (n-d) \left| \left\{ f \in \functions[D,q] : \dH(y, f(y)) = d \right\} \right|\\
	&= \sum_{d=0}^n (n-d) \binom{n}{d} (q-1)^d q^{-n} |\functions[D,q]|,\\
	&= n q^{n-1} q^{-n} |\functions[D,q]|.
\end{align*}
\end{proof}

\subsection{Average strict instability}

The average instability in $\functions[D,q]$ seems to behave in a more complicated fashion, but at least we have a result for the average instability in $\functions(n,q)$.

\begin{proposition} \label{prop:avg_instability}
For any $n \ge 1$,
$$
	\lim_{q \to \infty} \frac{1}{|\functions(n,q)|} \sum_{f \in \functions(n,q)} \instability(f) = e^{-1}.
$$
\end{proposition}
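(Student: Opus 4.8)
The plan is to compute the expected instability of a uniformly random $f \in \functions(n,q)$ by a second–moment / inclusion–exclusion argument on the random variable counting fixed points, and then show that as $q \to \infty$ only the values $\instability(f) \in \{0,1\}$ contribute to the limit, with $\instability(f) = 0$ occurring with probability tending to $1 - e^{-1}$. Concretely, for a uniformly random $f$ the values $f(x)$, $x \in [q]^n$, are independent and uniform on $[q]^n$, so the number of fixed points $N(f) := |\{x : f(x) = x\}|$ is a sum of $q^n$ independent Bernoulli$(q^{-n})$ indicators; hence $N(f)$ converges in distribution to a Poisson random variable of mean $1$ as $q \to \infty$. In particular $\Proba[\instability(f) = 0] = \Proba[N(f) \ge 1] \to 1 - e^{-1}$.

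First I would write $\instability(f) = \one{N(f) = 0} \cdot \instability(f) \ge \one{N(f) = 0}$, which already gives the lower bound $\liminf_q \mathbb{E}[\instability(f)] \ge \Proba[N(f) = 0] \to e^{-1}$. For the matching upper bound I would show $\mathbb{E}\big[\instability(f)\,\one{N(f) = 0}\big] \to e^{-1}$ as well, i.e. that conditioned on having no fixed point, the instability is $1$ with probability tending to $1$. The second step is therefore to bound $\Proba[\instability(f) \ge 2]$: this is the probability that for \emph{every} $x$, $f(x)$ differs from $x$ in at least two coordinates, equivalently that $f$ has no fixed point and also no ``near-fixed-point'' (a state $x$ with $\dH(x,f(x)) = 1$). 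I would estimate the number of $x$ with $\dH(x,f(x)) \le 1$: for each $x$ this event has probability $\frac{1 + n(q-1)}{q^n}$ over the choice of $f(x)$, and these events are independent across $x$, so letting $M(f) := |\{x : \dH(x,f(x)) \le 1\}|$, $M(f)$ is a sum of $q^n$ independent Bernoullis of success probability $\frac{1+n(q-1)}{q^n} \sim \frac{n}{q^{n-1}}$. Hence $\mathbb{E}[M(f)] = 1 + n(q-1) \to \infty$, and by a first/second–moment (Chebyshev) bound $\Proba[M(f) = 0] \to 0$; thus $\Proba[\instability(f) \ge 2] \le \Proba[M(f) = 0] \to 0$. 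Since also $\instability(f) \le n$ always, $\mathbb{E}[\instability(f)\,\one{\instability(f)\ge 2}] \le n\,\Proba[\instability(f)\ge 2] \to 0$.

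Combining: $\mathbb{E}[\instability(f)] = \Proba[\instability(f) = 1] + \mathbb{E}[\instability(f)\,\one{\instability(f)\ge 2}]$, and $\Proba[\instability(f) = 1] = \Proba[N(f) = 0] - \Proba[\instability(f) \ge 2] \to (1 - e^{-1}) - 0$... wait, that is wrong — let me instead write $\Proba[\instability(f)\ge 1] = \Proba[N(f)=0] \to 1 - e^{-1}$ and $\Proba[\instability(f)\ge 2] \to 0$, so $\Proba[\instability(f)=1] \to 1 - e^{-1}$, giving $\mathbb{E}[\instability(f)] \to (1 - e^{-1}) \cdot 1 + o(1)$. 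Hmm, that limit is $1 - e^{-1}$, not $e^{-1}$. So I have the roles of $N(f) = 0$ and $N(f)\ge 1$ swapped: $\instability(f) = 0 \iff N(f) \ge 1$, which has probability $\to 1 - e^{-1}$, while $\instability(f) \ge 1 \iff N(f) = 0$, probability $\to e^{-1}$. Therefore $\mathbb{E}[\instability(f)] \to e^{-1}$, as claimed.

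The main obstacle I anticipate is the \emph{upper} bound, specifically controlling $\Proba[\instability(f) \ge 2]$: one must verify that the expected number of states within Hamming distance $1$ of their image grows (it is $1 + n(q-1) \to \infty$) and that its variance is small enough relative to the mean squared for Chebyshev (or a Poisson-approximation / Janson-type inequality) to force $\Proba[M(f) = 0] \to 0$; because the defining events for different $x$ are genuinely independent here, this is routine but is the only place where a real estimate is needed. The lower bound and the Poisson limit for $N(f)$ are standard. One should also note the argument works for each fixed $n$ (the limit is in $q$), so all the $n$-dependent factors like $n(q-1)$ are harmless constants-times-$q$.
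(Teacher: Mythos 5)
Your proposal is correct and takes essentially the same approach as the paper: both identify $\Proba\{\instability(f)\ge 1\}=\Proba\{f\ \text{has no fixed point}\}\to e^{-1}$, kill $\Proba\{\instability(f)\ge 2\}$ by counting states within Hamming distance $1$ of their image using the independence of the values $f(x)$, and finish with the crude bound $\instability(f)\le n$ on the contribution of instability at least $2$. The only difference is cosmetic: you invoke a Chebyshev/second-moment bound for $\Proba\{M(f)=0\}$, whereas the paper simply computes the independent product $(1-q^{1-n})^{q^n}\le e^{-q}$ directly (and your mid-argument swap of the roles of $N(f)=0$ and $N(f)\ge 1$ is resolved correctly in your final statement).
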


\begin{proof}
Let $q$ be large enough, and suppose $f \in \functions(n,q)$ is chosen uniformly at random. Then
$$
	\Proba\{ \instability(f) = 0 \} = \Proba\{ |\Fix(f)| \ge 1 \} = 1 - e^{-1} + o(1).
$$
Now for any $x \in [q]^n$,
$$
	\Proba\{ \dH(x, f(x)) \ge 2 \} = 1 - \Proba\{ \dH(x, f(x)) \le 1 \} \le 1 - n q^{-n} (q-1) \le 1 - q^{1 - n}.
$$
Thus,
\begin{align*}
	\Proba\{ \instability(f) \ge 2 \} &= \prod_{x \in [q]^n} \Proba\{ \dH(x, f(x)) \ge 2 \}\\
	&\le (1 - q^{1 - n})^{q^n}\\
	&\le e^{-q}.
\end{align*}
Therefore, $P \{ \instability(f) = 1 \} = e^{-1} + o(1)$, and the expected instability tends to $e^{-1}$.
\end{proof}

In order to evaluate the average instability, we study the number of functions with positive instability, or in other words fixed point free functions. For any graph $D$, we partition $\functions[D,q]$ into
\begin{align*}
	\functions_0[D,q] &= \{ f \in \functions[D,q] : |\Fix(f)| = 0 \},\\
	\functions_1[D,q] &= \{ f \in \functions[D,q] : |\Fix(f)| = 1 \},\\
	\functions_2[D,q] &= \{ f \in \functions[D,q] : |\Fix(f)| \ge 2 \}.
\end{align*}
For the sake of conciseness, we use the notation
$$
	p_0[D,q] = \frac{|\functions_0[D,q]|}{|\functions[D,q]|}, \quad p_1[D,q] = \frac{|\functions_1[D,q]|}{|\functions[D,q]|}, \quad p_2[D,q] = \frac{|\functions_2[D,q]|}{|\functions[D,q]|}.
$$
We similarly define $\functions_0(D,q)$, $p_0(D,q)$, etc.

If $D$ is not acyclic, then \cite{AS} gives a construction of a fixed point free element of $\functions[D,q]$ for any $q$. We now give an upper bound on the number of fixed point free functions in $\functions[D,q]$.

\begin{proposition}
For any $D$ and $q$,
$$
	p_0[D,q] \le 1 - q^{- \feedback(D)}.
$$
\end{proposition}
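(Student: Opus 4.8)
The plan is to exhibit, inside the set $\functions[D,q]$, a large family of functions that are forced to have a fixed point, so that $p_0[D,q]$ cannot exceed $1$ minus the density of that family. The natural place to look is the fibres of the restriction map that records the local functions outside a minimum feedback vertex set. Fix a minimum feedback vertex set $I$ of $D$, so $|I| = \feedback(D)$ and $D - I$ is acyclic. Write $J = V \setminus I$ and sort $J = \{j_1, \dots, j_k\}$ in an acyclic ordering of $D[J]$, so that for each $j_a$ the in-neighbourhood $\neighbourhood(j_a; D)$ is contained in $I \cup \{j_1, \dots, j_{a-1}\}$.

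First I would describe the following averaging argument. Partition $\functions[D,q]$ according to the tuple $(f_v)_{v \in I}$ of local functions at the feedback vertices; each such class has the same size (the remaining coordinates $f_{j_1}, \dots, f_{j_k}$ range over the same set of admissible local functions, independently of the choice on $I$, since essential dependence is a coordinatewise condition). So it suffices to show that within each class, at least a $q^{-\feedback(D)}$ fraction of the functions have a fixed point. Fix the local functions $(f_v)_{v \in I}$, and also fix the local functions $f_{j_1}, \dots, f_{j_k}$ on $J$. Now consider varying only the values $a = (a_v)_{v \in I} \in [q]^I$ that we want to be the $I$-part of a fixed point: for any such $a$, process the acyclic part in order, setting $x_{j_a}^{*} = f_{j_a}(x^{*}_{\neighbourhood(j_a;D)})$ recursively (this is well-defined because the in-neighbours of $j_a$ already have assigned values, either in $I$ or earlier in $J$). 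This produces, for every $a \in [q]^I$, a unique state $x^{*}(a) \in [q]^n$ with $x^{*}(a)_J = f_J(x^{*}(a))$; it is a fixed point of $f$ iff additionally $f_v(x^{*}(a)) = a_v$ for all $v \in I$.

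Next I would turn this into a counting statement. For each fixed choice of $(f_v)_{v\in I}$ and $(f_{j})_{j \in J}$, the assignment $a \mapsto x^{*}(a)$ is injective (it is the identity on the $I$-coordinates), so it yields $q^{|I|}$ distinct candidate states. The obstruction is that, having fixed the local functions on $I$ already, we cannot freely force $f_v(x^{*}(a)) = a_v$. So instead I would re-order the quantifiers: fix $(f_j)_{j\in J}$ only, and count pairs. Given $(f_j)_{j \in J}$, for each $a \in [q]^I$ the state $x^{*}(a)$ depends only on $(f_j)_{j\in J}$; the set $\{x^{*}(a) : a \in [q]^I\}$ has exactly $q^{|I|}$ elements, pairwise distinct on $I$ hence pairwise distinct. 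A function $f$ in this class (i.e. with the given $(f_j)_{j\in J}$ and arbitrary $(f_v)_{v\in I}$) has $x^{*}(a)$ as a fixed point precisely when $f_v(x^{*}(a)) = a_v$ for all $v \in I$. Now fix $a$; as $(f_v)_{v\in I}$ ranges over all admissible tuples, the fraction with $f_v(x^{*}(a)) = a_v$ for all $v$ is exactly $q^{-|I|}$ — because prescribing the value of one local function at one point of its domain multiplies the count by $1/q$, and essential-dependence constraints are unaffected by prescribing a single value (as in the proof of Proposition~\ref{prop:avg_stability}). Hence among all $f$ in the class, a $q^{-|I|}$ fraction fix $x^{*}(a)$, so at least a $q^{-|I|}$ fraction fix some $x^{*}(a)$, i.e. have a fixed point. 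Averaging over the classes gives $p_1[D,q] + p_2[D,q] \ge q^{-\feedback(D)}$, i.e. $p_0[D,q] \le 1 - q^{-\feedback(D)}$.

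The step I expect to be the main obstacle — and the one I would write most carefully — is the claim that prescribing the value $f_v(x^{*}(a)) = a_v$ costs exactly a factor $1/q$ uniformly over the class and does not interfere with the requirement $G(f) = D$ (as opposed to $G(f) \subseteq D$). The point is that for a fixed $v \in I$, prescribing $f_v$ at the single input $x^{*}(a)_{\neighbourhood(v;D)}$ still leaves $q^{d(v) \cdot (\text{something})}$-many completions, among which the fraction that depend essentially on every in-neighbour of $v$ is the same as without the prescription, provided $q$ is large enough; but in fact we do not even need that refinement, because we are only producing a lower bound on $|\functions_1 \cup \functions_2|$ and can restrict attention to the subfamily where $(f_j)_{j\in J}$ already realises all arcs into $J$ and $(f_v)_{v\in I}$ is chosen from the admissible set realising arcs into $I$ — the single-point prescription, being one linear constraint on the value table, cuts any such admissible set (or any coset-like slice of it) by a factor at most $q$, which is all the inequality needs. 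I would phrase the final averaging so that it only ever uses ``the prescribed-value subset has size at least $1/q$ times the whole'', which is robust to these definitional subtleties.
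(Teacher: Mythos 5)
Your argument is correct, but it takes a genuinely different route from the paper. The paper decomposes $\functions[D,q]$ into translation classes $f(x)=\phi(x)+f(0,\dots,0)$ with $\phi(0,\dots,0)=(0,\dots,0)$, observes that within such a class the translates possessing a fixed point are counted exactly by the guessing code $C_\phi=\{\phi(x)-x\}$, and then invokes the cited bound $|C_\phi|\ge q^{n-\feedback(D)}$ from the guessing-game literature. You instead condition on the local functions outside a minimum feedback vertex set $I$, propagate through the acyclic part to obtain, for each $a\in[q]^I$, the unique candidate fixed point $x^*(a)$ with $x^*(a)_I=a$ and $x^*(a)_J=f_J(x^*(a))$, and then argue that forcing $f_v(x^*(a))=a_v$ for all $v\in I$ retains a $q^{-|I|}$ fraction of the admissible choices of $(f_v)_{v\in I}$. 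This buys a self-contained proof (no appeal to the guessing-code bound, which is itself proved by a feedback-vertex-set propagation of the same flavour), at the cost of having to justify the key fraction carefully. On that point your phrasing is the only weak spot: ``one linear constraint on the value table cuts any admissible set by a factor at most $q$'' is not a valid general principle (a constraint can annihilate an arbitrary set). The clean justification is the one the paper uses globally: for each $v\in I$, the set of local functions depending essentially on exactly $\neighbourhood(v;D)$ is invariant under $f_v\mapsto f_v+c$ for $c\in[q]$, and this free $[q]$-action permutes the value at the prescribed input $x^*(a)_{\neighbourhood(v;D)}$ cyclically, so exactly a $1/q$ fraction takes the prescribed value; independence across $v\in I$ then gives exactly $q^{-|I|}$, and fixing a single $a$ (say $a=0$) suffices for the lower bound $p_1[D,q]+p_2[D,q]\ge q^{-\feedback(D)}$, i.e. $p_0[D,q]\le 1-q^{-\feedback(D)}$. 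With that replacement your proof is complete, and it is the same uniformity fact that is implicitly used in the proof of Proposition \ref{prop:avg_stability}, so your citation of it is apt.
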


\begin{proof}
We can uniquely express $f \in \functions[D,q]$ as $f(x) = \phi(x) + f(0, \dots, 0)$, where $\phi \in \functions[D,q]$ satisfies $\phi(0, \dots, 0) = (0, \dots, 0)$. The guessing code of $\phi$ is 
$C_\phi := \{ \phi(x) - x : x \in [q]^n\}$ \cite{Gad}. We then have
\begin{align*}
	|C_\phi| &= |\{ y \in [q]^n : \exists x \text{ s.t. } \phi(x) - x = y \}\\
	&= |\{ y \in [q]^n : \exists x \in \Fix(\phi - y) \}\\
	&= |\{f : f(x) - f(0, \dots, 0) = \phi(x), |\Fix(f)| \ge 1\}|.
\end{align*}
Therefore, using the fact that $|C_\phi| \ge q^{n - \feedback(D)}$ \cite{Gad} we obtain
$$
	|\functions_0[D,q]| \ge \left( q^n - q^{n - \feedback(D)} \right) q^{-n} |\functions[D,q]|.
$$
\end{proof}

We remark that the bound above is tight for some cases. Firstly, it is clearly tight if $D$ is acyclic. Secondly, let $q=2$ and $D$ be the disjoint union of $\nu = \feedback(D)$ cycles (with no other arcs). Then $|\functions[D,2]| = 2^n$ for each local function is of the form $f_v(x_u) = x_u + \epsilon_v$, where $\epsilon_v \in \{0,1\}$ and $(u,v) \in D$. A cycle is positive if there is an even number of vertices $v$ such that $\epsilon_v = 1$, and negative otherwise. It is easy to check that either all the cycles of $f$ are positive, in which case $f$ has $2^\nu$ fixed points, or that one of its cycles is negative and hence $f$ has no fixed points. Thus
$$
	|\functions_0[D,2]|  = \left( 1 - 2^{- \feedback(D)} \right) |\functions[D,2]|.
$$

In view of \cite{AS} and \cite[Lemma 1]{GRF15}, the following properties are equivalent.
\begin{enumerate}
	\item $D$ is acyclic.
	
	\item For any $q \ge 2$, $|\functions_0[D,q]| = 0$; in other words, every function with interaction graph $D$ has at least one fixed point.
	
	\item For any $q \ge 2$, $|\functions_2[D,q]| = 0$; in other words, every function with interaction graph $D$ has at most one fixed point.
\end{enumerate}
We are interested in the opposite property, where $|\functions_0[D,q]|$ or $|\functions_2[D,q]|$ grow as a positive proportion of $|\functions[D,q]|$. We remark that since $|\functions[D,q]| \sim |\functions(D,q)|$ for large $q$, the properties below for $\functions[D,q]$ are equivalent to their counterparts for $\functions(D,q)$.

\begin{proposition} \label{prop:properties}
For any graph $D$, consider the following properties.
\begin{enumerate}[label=(\alph*)]
	\item \label{it:d2} There exists a constant $0 < a < 1$ such that for any $q$ large enough,
	$p_2[D,q]| \ge a.$

	\item \label{it:d0} There exists a constant $0 < b < 1$ such that for any $q$ large enough,
	$p_0[D,q] \ge b.$
		
	\item \label{it:d1} There exists a constant $0 < c < 1$ such that for any $q$ large enough,
	$p_1[D,q] \le c.$	

	\item \label{it:i} There exists a constant $d > 0$ such that for any $q$ large enough,
	$$
		\frac{1}{|\functions[D,q]|} \sum_{f \in \functions[D,q]} \instability(f) \ge d.
	$$
\end{enumerate}
Then we have the following implications: \ref{it:d2} $\Rightarrow$ \ref{it:d0} $\Leftrightarrow$ \ref{it:d1} $\Leftrightarrow$ \ref{it:i}.
\end{proposition}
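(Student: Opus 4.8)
The plan is to reduce everything to two elementary facts and then read off each implication from the identity $p_0[D,q] + p_1[D,q] + p_2[D,q] = 1$. The first, and only non-routine, ingredient is the inequality
$$
	p_0[D,q] \ge p_2[D,q] \qquad \text{for every } D \text{ and every } q,
$$
which I would prove by a fiber decomposition of $\functions[D,q]$. Write each $f$ uniquely as $f = \phi + c$ with $c := f(0,\dots,0)$ and $\phi := f - c$; subtracting a constant from a coordinate function changes no essential dependency, so $G(\phi) = G(f)$, and hence $\functions[D,q]$ is partitioned into $q^n$-element fibers, one for each $\phi \in \functions[D,q]$ with $\phi(0,\dots,0) = (0,\dots,0)$. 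In the fiber of such a $\phi$, the function $\phi + c$ has fixed point set $\{x : \phi(x) - x = -c\}$, so, setting $N_\phi(y) := |\{x \in [q]^n : \phi(x) - x = y\}|$, the number of fixed points of $\phi + c$ is $N_\phi(-c)$, and as $c$ ranges over $[q]^n$ these values run exactly through the multiset $\{N_\phi(y) : y \in [q]^n\}$. Since $\sum_{y} N_\phi(y) = q^n$, if $m_1$ and $m_{\ge 2}$ denote the numbers of $y$ with $N_\phi(y) = 1$ and $N_\phi(y) \ge 2$, then $q^n \ge m_1 + 2m_{\ge 2}$ while $|C_\phi| = m_1 + m_{\ge 2}$, where $C_\phi = \{\phi(x)-x : x\}$ is the guessing code; hence the number $q^n - |C_\phi|$ of fixed-point-free functions in the fiber is at least $m_{\ge 2}$, the number of functions in the fiber with at least two fixed points. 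Summing over all fibers gives the claimed inequality.

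The second ingredient is the two-sided comparison
$$
	p_0[D,q] \le \frac{1}{|\functions[D,q]|} \sum_{f \in \functions[D,q]} \instability(f) \le n\, p_0[D,q],
$$
which is immediate because $\instability(f) = 0$ exactly when $f$ has a fixed point and $1 \le \instability(f) \le n$ otherwise, so the sum runs only over $\functions_0[D,q]$ and is sandwiched between $|\functions_0[D,q]|$ and $n|\functions_0[D,q]|$. The implications now follow mechanically. For \ref{it:d2} $\Rightarrow$ \ref{it:d0}: $p_0[D,q] \ge p_2[D,q] \ge a$. For \ref{it:d0} $\Rightarrow$ \ref{it:d1}: $p_1[D,q] = 1 - p_0[D,q] - p_2[D,q] \le 1 - b \in (0,1)$. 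For \ref{it:d1} $\Rightarrow$ \ref{it:d0}: combining the identity with $p_0 \ge p_2$ gives $2p_0[D,q] \ge p_0[D,q] + p_2[D,q] = 1 - p_1[D,q] \ge 1 - c$, so $p_0[D,q] \ge (1-c)/2 \in (0,1)$. For \ref{it:d0} $\Rightarrow$ \ref{it:i}: take $d = b$ in the left inequality of the comparison. For \ref{it:i} $\Rightarrow$ \ref{it:d0}: the right inequality gives $p_0[D,q] \ge d/n > 0$; moreover the bound $p_0[D,q] \le 1 - q^{-\feedback(D)}$ proved above forces, for every large $q$, $d \le n\,p_0[D,q] \le n\bigl(1 - q^{-\feedback(D)}\bigr) < n$ (which in particular excludes $\feedback(D) = 0$, i.e. $D$ acyclic), so $b := d/n$ lies in $(0,1)$.

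The only step needing any care is the fiber inequality $p_0 \ge p_2$, and within it the two points to get right are (i) that subtracting the constant $f(0,\dots,0)$ coordinatewise preserves the \emph{exact} interaction graph $D$, so the decomposition stays inside $\functions[D,q]$ and not merely $\functions(D,q)$, and (ii) the counting $q^n = \sum_{y} N_\phi(y) \ge m_1 + 2 m_{\ge 2}$, which yields $q^n - |C_\phi| \ge m_{\ge 2}$ fiber by fiber. Everything after that is bookkeeping with $p_0 + p_1 + p_2 = 1$ and the elementary two-sided bound on the average instability.
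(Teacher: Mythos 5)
Your proof is correct, and while it rests on the same decomposition the paper uses --- writing $f = \phi + f(0,\dots,0)$ with $\phi(0,\dots,0)=(0,\dots,0)$, which indeed preserves the exact interaction graph and splits $\functions[D,q]$ into fibers of size $q^n$ --- you extract a different key lemma from it. The paper's Lemma \ref{lem:average_fixed_points} is the exact identity $\sum_{f \in \functions[D,q]} |\Fix(f)| = |\functions[D,q]|$, from which \ref{it:d2} $\Rightarrow$ \ref{it:d0} and \ref{it:d1} $\Rightarrow$ \ref{it:d0} are obtained by contradiction with some $\epsilon$-bookkeeping, the remaining implications being handled essentially as you do. Your fiber-by-fiber count ($m_0 = q^n - |C_\phi| \ge m_{\ge 2}$) instead yields the exact, non-asymptotic inequality $p_0[D,q] \ge p_2[D,q]$ for every $q$, after which every implication follows mechanically from $p_0+p_1+p_2=1$ with explicit constants ($b=a$, $c=1-b$, $b=(1-c)/2$, $d=b$, $b=d/n$) and no contradiction argument. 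The two lemmas are closely related: writing $p^{=}_k$ for the proportion of functions with exactly $k$ fixed points, the paper's identity gives $p_0 = \sum_{k\ge 2}(k-1)p^{=}_k \ge p_2$, so your inequality is a (slightly weaker) consequence of it, but it is all that is needed here and it streamlines the asymptotic statements. Your treatment of \ref{it:i} $\Rightarrow$ \ref{it:d0}, invoking the earlier bound $p_0[D,q] \le 1 - q^{-\feedback(D)}$ to certify $b<1$, is valid, though one could avoid it by simply taking $b = \min\{d/n, 1/2\}$.
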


The proof is based on the following lemma: the average number of fixed points in $\functions[D,q]$ is equal to one.

\begin{lemma} \label{lem:average_fixed_points}
For any $D$ and $q$,
$$
	\sum_{f \in \functions[D,q]} |\Fix(f)| = |\functions[D,q]|. 
$$
\end{lemma}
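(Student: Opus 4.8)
The plan is to count pairs $(f, x)$ where $f \in \functions[D,q]$ and $x \in \Fix(f)$ by summing over $x$ first rather than over $f$. That is, I would write
\[
	\sum_{f \in \functions[D,q]} |\Fix(f)| = \sum_{f \in \functions[D,q]} \sum_{x \in [q]^n} \one{f(x) = x} = \sum_{x \in [q]^n} \left| \left\{ f \in \functions[D,q] : f(x) = x \right\} \right|,
\]
so the claim reduces to showing that for each fixed $x \in [q]^n$, exactly a $q^{-n}$ fraction of the functions in $\functions[D,q]$ fix $x$; summing $q^{-n} |\functions[D,q]|$ over the $q^n$ choices of $x$ then gives the result.

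To establish that fraction, the natural tool is the translation symmetry already used in the proof of the preceding proposition: for a fixed $c \in [q]^n$, the map $f \mapsto f + c$ (coordinatewise, mod $q$) is a bijection of $\functions[D,q]$ onto itself, since adding a constant does not change which variables a local function depends on essentially, hence preserves the interaction graph exactly. More usefully, for a fixed $x$ I would partition $\functions[D,q]$ according to the value $f(x) \in [q]^n$: writing $f = \phi + f(x)$ where I want $\phi$ to be the ``shifted'' function, observe that $f \mapsto f - f(x)$ is not quite well-defined as a group action on the index set, so instead I would argue directly that the map sending $f$ to $f - (f(x) - x)$ (subtracting the constant vector $f(x)-x$) is a bijection of $\functions[D,q]$ that sends the fiber $\{f : f(x) = a\}$ to the fiber $\{f : f(x) = x\}$ for each $a$. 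Since these $q^n$ fibers partition $\functions[D,q]$ and are pairwise in bijection, each has size $q^{-n}|\functions[D,q]|$, and in particular $|\{f : f(x) = x\}| = q^{-n}|\functions[D,q]|$, as needed.

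The only delicate point — and the main obstacle — is verifying that adding a constant vector genuinely preserves membership in $\functions[D,q]$, i.e. that it changes neither the fact that $f_v$ depends essentially on $x_u$ for each arc $(u,v) \in D$, nor the fact that $f_v$ does not depend on $x_u$ for each non-arc. This is immediate: for any constant $c$, $f_v$ and $f_v + c_v$ have exactly the same essential dependencies, because $(f_v + c_v)(x) \ne (f_v + c_v)(y)$ iff $f_v(x) \ne f_v(y)$. Hence $G(f + c) = G(f) = D$, and the translation is a self-bijection of $\functions[D,q]$ with inverse $g \mapsto g - c$. With this, the fiber-counting argument above goes through verbatim, and the lemma follows.
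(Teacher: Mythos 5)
Your proof is correct and rests on the same key fact as the paper's: $\functions[D,q]$ is closed under adding constant vectors, so the double count of pairs $(f,x)$ with $f(x)=x$ yields exactly $|\functions[D,q]|$ (the paper packages this as the unique decomposition $f=\phi+f(0,\dots,0)$ with $\phi(0,\dots,0)=(0,\dots,0)$, you as the $q^n$ equinumerous fibers $\{f : f(x)=a\}$). One cosmetic remark: the map $f\mapsto f-(f(x)-x)$ is not itself a bijection of $\functions[D,q]$ onto itself (it collapses every fiber onto $\{f : f(x)=x\}$); what you actually use, and what is true, is that its restriction to each fiber $\{f : f(x)=a\}$ is translation by the constant $x-a$, hence a bijection onto $\{f : f(x)=x\}$, which gives the fiber count you need.
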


\begin{proof}
Let $\Phi = \{ \phi \in \functions[D, q] : \phi(0, \dots, 0) = (0, \dots, 0) \}$. We can uniquely express any function $f \in \functions[D,q]$ as $f(x) = \phi(x) + f(0, \dots, 0)$, where $\phi \in \Phi$. Then
\begin{align*}
	\sum_{f \in \functions[D,q]} |\Fix(f)| &= \sum_{f \in \functions[D,q]} \sum_{x \in [q]^n} \one{ f(x) = x }\\
	&= \sum_{\phi \in \Phi} \sum_{y \in [q]^n} \sum_{x \in [q]^n} \one{ \phi(x) + y = x }\\
	&= \sum_{\phi \in \Phi} \sum_{x \in [q]^n} \sum_{y \in [q]^n} \one{ y = x - \phi(x) }\\
	&= \sum_{\phi \in \Phi} \sum_{x \in [q]^n} 1\\
	&= |\functions[D,q]|.
\end{align*}
\end{proof}

\begin{proof}[Proof of Proposition \ref{prop:properties}]
\ref{it:d2} $\Rightarrow$ \ref{it:d0}. Suppose that $D$ satisfies Property \ref{it:d2} but not \ref{it:d0}, then for any $a > \epsilon > 0$ there exists $q$ large enough such that $d_2 \ge a$ and $d_0 < \epsilon$, and hence $d_1 + d_2 > 1 - \epsilon$. The average number of fixed points in $\functions[D,q]$ is at least $2 d_2 + d_1 \ge a + 1 - \epsilon > 1$, which contradicts Lemma \ref{lem:average_fixed_points}.

The proof of \ref{it:d1} $\Rightarrow$ \ref{it:d0} is analogous, and hence omitted. Moreover, Property \ref{it:d0} clearly implies \ref{it:d1} for $c = 1 - a$. Finally, the instability of a fixed-point free FDS is between $1$ and $n$, which immediately implies that Properties \ref{it:d0} and \ref{it:i} are equivalent.
\end{proof}

Since the probability that a random mapping in $\functions(n,q)$ has $k$ fixed points tends to $e^{-1}/k!$ for fixed $k$, we see that the graph $\loops{K_n}$ satisfies Property \ref{it:d2}. Moreover, the graph with only $n$ loops satisfies Property \ref{it:d2}, and more precisely
\begin{align*}
	\lim_{q \to \infty} p_0[D,q] &= 1 - (1 - e^{-1})^n,\\
	\lim_{q \to \infty} p_1[D,q] &= e^{-n},\\
	\lim_{q \to \infty} p_2[D,q] &= (1 - e^{-1})^n - e^{-n},\\
	\lim_{q \to \infty} \frac{1}{|\functions[D,q]|} \sum_{f \in \functions[D,q]} \instability(f) &= n e^{-1}.
\end{align*}

We therefore conjecture that if $D$ is non-acyclic, then the average instability is bounded away from zero for large alphabets (Property \ref{it:i}). We give two versions of this conjecture, the first one being stronger than the second according to Proposition \ref{prop:properties}.

\begin{conjecture} \label{conj:d2}
\begin{enumerate}[label=(\alph*)]
	\item Any non-acyclic graph $D$ satisfies Property \ref{it:d2}.
	
	\item Any non-acyclic graph $D$ satisfies Property \ref{it:d0}.
\end{enumerate}
\end{conjecture}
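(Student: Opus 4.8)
The plan is to prove the stronger statement, Part (a): every non-acyclic $D$ satisfies Property \ref{it:d2}; Part (b) then follows immediately from Proposition \ref{prop:properties}. Write $N = N(f) := |\Fix(f)|$ for $f$ chosen uniformly at random. By the remark preceding Proposition \ref{prop:properties}, it suffices to work in $\functions(D,q)$ rather than $\functions[D,q]$, and there a uniform $f$ is simply a family of mutually independent uniform local functions $f_v \colon [q]^{\neighbourhood(v)} \to [q]$. Lemma \ref{lem:average_fixed_points} (whose proof applies verbatim to $\functions(D,q)$) gives $\mathbb{E}[N] = 1$. I would then control the second moment and invoke the Paley--Zygmund inequality in the form
\[
	p_2(D,q) = \Proba\{ N \ge 2 \} \ge \frac{\mathbb{E}[N(N-1)]^2}{\mathbb{E}[(N(N-1))^2]},
\]
which follows from Cauchy--Schwarz since $N(N-1) = N(N-1)\,\one{N \ge 2}$. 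Thus it suffices to bound the numerator away from $0$ and the denominator above, uniformly for large $q$.

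Both quantities are governed by the factorial moments $\mathbb{E}[N^{\underline k}] = \sum \Proba\{ f(x^{(i)}) = x^{(i)} \ \forall i\}$, summed over $k$-tuples of distinct states, which I would compute by expanding over coordinates. For such a tuple, record for each vertex $w$ the partition $\pi_w$ of $\{1,\dots,k\}$ according to which states share the value at $w$; the joint fixed-point probability factorises as $\prod_v q^{-r_v}$, where $r_v$ is the number of distinct in-neighbourhood values, and is nonzero only when $\pi_v$ is coarser than $\bigwedge_{w \in \neighbourhood(v)} \pi_w$ for every $v$. Summing over tuples, the contribution of a fixed pattern $(\pi_w)_w$ scales as $q^{e}$ with $e = \sum_v (|\pi_v| - r_v) \le 0$; hence every factorial moment stays bounded as $q \to \infty$ and converges to the number of patterns with $e = 0$, namely those satisfying $\pi_v = \bigwedge_{w \in \neighbourhood(v)} \pi_w$ for all $v$. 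For $k=2$ this count equals the number of nonempty $\Delta \subseteq V$ such that every vertex of $\Delta$ has an in-neighbour in $\Delta$ and no vertex outside $\Delta$ has an in-neighbour in $\Delta$ (take $\Delta = \{w : \pi_w \text{ is discrete}\}$).

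For the numerator it then remains to exhibit one such $\Delta$ whenever $D$ is non-acyclic: I would pick a cycle $C$ and let $\Delta$ be the set of vertices reachable from $C$. Then $\Delta$ is closed under out-neighbours, and each $w \in \Delta$ has, as an in-neighbour inside $\Delta$, its predecessor on a path from $C$ (or on $C$ itself); hence $\lim_q \mathbb{E}[N(N-1)] \ge 1$. For the denominator, I would expand $(N(N-1))^2$ as a nonnegative integer combination of $N^{\underline 2}, N^{\underline 3}, N^{\underline 4}$, each convergent to a finite constant by the previous paragraph, so that $\mathbb{E}[(N(N-1))^2]$ is bounded above. The displayed inequality then yields $\liminf_q p_2(D,q) > 0$, i.e. Property \ref{it:d2} for $\functions(D,q)$, which passes to $\functions[D,q]$, and Proposition \ref{prop:properties} delivers Part (b).

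I expect the main obstacle to be the rigorous, uniform control of the higher factorial moments entering the denominator: the pattern expansion must account for the exact falling-factorial counts of value assignments and for the distinctness of the $k$ states (the meet of all $\pi_w$ must be the discrete partition), so that the sub-leading contributions are genuinely $O(q^{-1})$ and cannot accumulate over the finitely many patterns. A secondary point requiring care is the passage from $\functions(D,q)$ to $\functions[D,q]$: although the two classes are asymptotically equinumerous, one must verify that discarding the functions with $G(f) \subsetneq D$ removes only a vanishing fraction of those with at least two fixed points, so that the positive lower bound on $p_2$ is inherited.
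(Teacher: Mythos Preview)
The statement you are addressing is presented in the paper as an open \emph{conjecture}; the paper does not prove it. What the paper does establish is only the special case $D=\vec{C}_n$ (Theorem~\ref{th:Cn_property}), via an explicit inclusion--exclusion count of fixed-point-free functions in $\functions(\vec{C}_n,q)$ that exploits the identification of $\Fix(f)$ with a code of minimum distance $n$.

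Your proposal attacks the general case through the factorial moments of $N=|\Fix(f)|$ combined with a Cauchy--Schwarz/Paley--Zygmund bound, and as far as I can see the argument is sound. The expansion of $\mathbb{E}[N^{\underline{k}}]$ over partition patterns $(\pi_w)_w$ is correct: the contribution of a fixed pattern is $\prod_v q(q-1)\cdots(q-|\pi_v|+1)\cdot q^{-r_v}$, each factor is at most $q^{|\pi_v|-r_v}\le 1$ since $\pi_v$ must be coarser than $\bigwedge_{w\in N(v)}\pi_w$, and there are only finitely many patterns; hence every $\mathbb{E}[N^{\underline{k}}]$ is bounded uniformly in $q$ and converges to the number of patterns with $\pi_v=\bigwedge_{w\in N(v)}\pi_w$ throughout. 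Your description of the $k=2$ limit as the number of nonempty $\Delta\subseteq V$ closed under out-neighbours in which every vertex has an in-neighbour inside $\Delta$ is accurate, and for non-acyclic $D$ the set of vertices reachable from a cycle is such a $\Delta$, so $\lim_q\mathbb{E}[N^{\underline 2}]\ge 1$. Since $(N(N-1))^2=N^{\underline 4}+4N^{\underline 3}+2N^{\underline 2}$, the denominator converges as well. Both obstacles you flag are real but routine: the sub-leading corrections in each pattern are $O(q^{-1})$ and there are boundedly many patterns; and since $|\functions(D,q)\setminus\functions[D,q]|=o(|\functions[D,q]|)$, a positive $\liminf p_2(D,q)$ transfers to $p_2[D,q]$.

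In short, your approach goes well beyond what the paper proves and, carried out carefully, would settle Conjecture~\ref{conj:d2} in full. The paper's method for $\vec{C}_n$ is more explicit---it yields the exact limits $e^{-1}$, $e^{-1}$, $1-2e^{-1}$---but is tailored to that single graph; your second-moment method trades that explicitness for generality.
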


We now prove the conjecture for cycles.

\begin{theorem} \label{th:Cn_property}
For any $n \ge 2$, $\vec{C}_n$ satisfies Property (a). More precisely,
\begin{align*}
	\lim_{q \to \infty} p_0[\vec{C}_n, q] &= e^{-1},\\
	\lim_{q \to \infty} p_1[\vec{C}_n, q] &= e^{-1},\\
	\lim_{q \to \infty} p_2[\vec{C}_n, q] &= 1 - 2e^{-1},\\
	\lim_{q \to \infty} \frac{1}{|\functions[\vec{C}_n, q]|} \sum_{f \in \functions[\vec{C}_n, q]} \instability(f) &= e^{-1}.
\end{align*}
\end{theorem}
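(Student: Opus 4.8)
The plan is to reduce the entire statement to the asymptotic fixed-point count of a composition of $n$ independent uniformly random self-maps of $[q]$, and to identify that distribution by computing its factorial moments exactly.

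\emph{Structure and reduction.} First I would record the structure of $\functions[\vec{C}_n,q]$. Labelling the arcs of $\vec{C}_n$ as $(v-1,v)$ (indices modulo $n$), a function $f=(f_1,\dots,f_n)$ lies in $\functions[\vec{C}_n,q]$ exactly when each $f_v$ is a non-constant unary map of $x_{v-1}$; so $\functions[\vec{C}_n,q]$ is the set of $n$-tuples of non-constant maps $[q]\to[q]$, and a uniform element is an $n$-tuple of i.i.d.\ uniform non-constant maps. Chasing the fixed-point equations $x_v=f_v(x_{v-1})$ around the cycle shows that $a\mapsto\bigl(f_1(a),f_2f_1(a),\dots,a\bigr)$ is a bijection from $\Fix(h)$ onto $\Fix(f)$, where $h:=f_n\circ\cdots\circ f_1$; hence $|\Fix(f)|=|\Fix(h)|$. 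I would also observe that the instability statement is a corollary of the one about $p_0$: for any such $f$ and any $a\in[q]$, the state $x$ with $x_1=a$ and $x_v=f_v(x_{v-1})$ for $2\le v\le n$ has $\dH(x,f(x))\le 1$ (only coordinate $1$ can disagree), so $\instability(f)\le 1$ always, with equality iff $f$ is fixed-point-free; thus $\sum_f\instability(f)=|\functions_0[\vec{C}_n,q]|$ and the average instability equals $p_0[\vec{C}_n,q]$. Since moreover $p_2=1-p_0-p_1$, it is enough to show $\Proba(|\Fix(h)|=k)\to e^{-1}/k!$ for each fixed $k$.

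\emph{Factorial moments.} Because the total variation distance between $n$ i.i.d.\ uniform maps and $n$ i.i.d.\ uniform non-constant maps is at most $n q^{1-q}\to 0$, I may replace the $f_v$ by unrestricted i.i.d.\ uniform maps $[q]\to[q]$. I would then compute, for $N:=|\Fix(h)|$, the $r$-th factorial moment $\mathbb{E}\bigl[N(N-1)\cdots(N-r+1)\bigr]=\sum\Proba(a_1,\dots,a_r\in\Fix(h))$, the sum ranging over ordered $r$-tuples of distinct points, by tracing the $r$ trajectories $a_i\to f_1(a_i)\to f_2f_1(a_i)\to\cdots$. Two facts make this exact: once two trajectories coincide at some layer they coincide at all later layers, so the layer-$(n-1)$ values are pairwise distinct iff all layers $0,\dots,n-1$ are; and, conditioned on the layer-$(n-1)$ values being pairwise distinct, $f_n$ sends each of them back to the corresponding $a_i$ with probability exactly $q^{-r}$ (and the event is impossible if two of those values agree, as the $a_i$ are distinct). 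This yields $\Proba(a_1,\dots,a_r\in\Fix(h))=q^{-r}\bigl((q)_r/q^r\bigr)^{n-1}$ for every distinct $r$-tuple, where $(q)_r:=q(q-1)\cdots(q-r+1)$, whence $\mathbb{E}\bigl[N(N-1)\cdots(N-r+1)\bigr]=(q)_r\,q^{-r}\bigl((q)_r/q^r\bigr)^{n-1}=\bigl((q)_r/q^r\bigr)^{n}\to 1$ for each fixed $r$. Since $1$ is the $r$-th factorial moment of $\mathrm{Poisson}(1)$, which is determined by its moments, the method of moments gives $N\Rightarrow\mathrm{Poisson}(1)$, so $\Proba(N=k)\to e^{-1}/k!$; in particular $p_0,p_1\to e^{-1}$ and $p_2\to 1-2e^{-1}>0$, which also confirms Property~(a).

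\emph{Main difficulty.} I expect the only delicate point to be the factorial-moment bookkeeping: one must be careful that the collision analysis is exact rather than merely asymptotic. Here the permanence of trajectory collisions is what saves the day — it makes the moments come out \emph{exactly} $\bigl((q)_r/q^r\bigr)^{n}$, with no error terms to estimate. The structural reduction to $h$, and the reduction of the instability claim to $p_0$, are routine.
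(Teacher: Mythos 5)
Your proposal is correct, and it takes a genuinely different route from the paper. The paper works with $\functions(\vec{C}_n,q)$ directly: it characterises the possible fixed-point sets of such functions as codes of minimum Hamming distance $n$ in $[q]^n$ (citing earlier work), counts the codes of each cardinality, and applies inclusion--exclusion to obtain exact finite-$q$ formulas, namely $p_0(\vec{C}_n,q)=\sum_{t=0}^q \frac{(-1)^t}{t!}\bigl[(1-\tfrac1q)\cdots(1-\tfrac{t-1}{q})\bigr]^n$ and an analogous expression for $p_1$; it then proves convergence of this truncated alternating series to $e^{-1}$ by a hands-on $\epsilon$-estimate (splitting at $L=\lceil\log q\rceil$). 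You instead exploit the bijection $\Fix(f)\leftrightarrow\Fix(f_n\circ\cdots\circ f_1)$, pass from non-constant to unrestricted uniform local maps by a total-variation bound, and compute the factorial moments of $|\Fix(h)|$ exactly as $\bigl((q)_r/q^r\bigr)^n$ using the permanence of trajectory collisions, concluding by the method of moments that $|\Fix(f)|$ converges to $\mathrm{Poisson}(1)$; your collision bookkeeping and the conditioning on $f_n$ are sound, and Poisson$(1)$ is indeed moment-determined. Both proofs share the two routine observations (instability at most $1$ on the cycle, so the average instability equals $p_0$; and the asymptotic equivalence of the strict and non-strict function classes). What each buys: the paper's inclusion--exclusion gives exact expressions for $p_0$ and $p_1$ at every finite $q$, whereas your argument avoids the delicate alternating-series estimate entirely and yields a strictly stronger conclusion, $\lim_{q\to\infty}\Proba\{|\Fix(f)|=k\}=e^{-1}/k!$ for every fixed $k$, from which all four stated limits (and Property (a)) follow at once.
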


\begin{proof}
We have that $S \subseteq [q]^n$ is a set of fixed points of some function $f \in \functions(\vec{C}_n,q)$ if and only if $S$ is a code of minimum distance $n$ \cite{GR11}; in other words, for any distinct $x, y \in S$ and any $v \in V$, $x_v \ne y_v$. The number of codes of minimum distance $n$ and cardinality $t \ge 1$ in $[q]^n$ is then
$$
	c_n := \frac{1}{t!} \left[ q(q-1) \dots (q-t+1) \right]^n.
$$
For any such $S$, let $A_S$ denote the number of functions $f \in \functions(\vec{C}_n,q)$ such that $S \subseteq \Fix(f)$. Then $f \in A_S$ if and only if for any $v \in V$ and any $s \in S$, $f_v(s_{v-1}) = s_v$. Since the values outside of $S$ are arbitrary, there are $q^{q - t}$ choices for $f_v$, and hence
$$
	|A_S| = \prod_{v \in V} q^{q - t} = q^{-nt} |\functions(\vec{C}_n,q)|.
$$

Thus, by the inclusion-exclusion principle, the number of fixed point free functions in $\functions(\vec{C}_n,q)$ is given by
\begin{align*}
	|\functions_0(\vec{C}_n,q)| &= |\functions(\vec{C}_n,q)| \left\{ 1 - \sum_{t=1}^q (-1)^{t-1} c_n q^{-nt} \right\},\\
	p_0(\vec{C}_n,q) &= \sum_{t=0}^q \frac{(-1)^t}{t!} \left[ \left( 1 - \frac{1}{q} \right) \dots \left( 1 - \frac{t-1}{q} \right) \right]^n.
\end{align*}

We denote the right hand side of the last equation as $V_q$.

\begin{claim}
$\lim_{q \to \infty} V_q = e^{-1}$.
\end{claim}

\begin{proof}
Let $\epsilon > 0$ and $q$ large enough so that
$$
	\left| \sum_{t= 0}^L \frac{(-1)^t}{t!} - e^{-1} \right| < \alpha, \quad
	n \frac{L}{q} < \beta, \quad
	\frac{q}{L!} < \gamma,
$$
where $L = \lceil \log q \rceil$ and $\alpha + e \beta + \gamma < \epsilon$. We will prove that $|V_q - e^{-1}| < \epsilon$. Firstly, for all $t \le L$,
$$
	\left[ \left( 1 - \frac{1}{q} \right) \dots \left( 1 - \frac{t-1}{q} \right) \right]^n > \left( 1 - \frac{L}{q} \right)^n > 1 - n \frac{L}{q} > 1 - \beta.
$$
Secondly, 
$$
	\sum_{t=L+1}^q \frac{(-1)^t}{t!} \left[ \left( 1 - \frac{1}{q} \right) \dots \left( 1 - \frac{t-1}{q} \right) \right]^n < \frac{q}{L!} < \gamma.
$$
Combining, we obtain
\begin{align*}
	V_q &< \sum_{\substack{t \text{ even} \\ t \le L}} \frac{1}{t!} - (1 - \beta) \sum_{\substack{t  \text{ odd} \\ t \le L}} \frac{1}{t!} + \gamma\\
	&= \sum_{t=0}^L \frac{(-1)^t}{t!} + \beta \sum_{\substack{t  \text{ odd} \\ t \le L}} \frac{1}{t!} + \gamma\\
	&< e^{-1} + \alpha + e \beta + \gamma \\
	&< e^{-1} + \epsilon.
\end{align*}
On the other hand,
\begin{align*}
	V_q &> (1 - \beta) \sum_{\substack{t \text{ even} \\ t \le L}} \frac{1}{t!} -  \sum_{\substack{t  \text{ odd} \\ t \le L}} \frac{1}{t!}\\
	&= \sum_{t=0}^L \frac{(-1)^t}{t!} - \beta \sum_{\substack{t \text{ even} \\ t \le L}} \frac{1}{t!} \\
	&> e^{-1} - \alpha - e \beta \\
	&> e^{-1} - \epsilon.
\end{align*}
\end{proof}

This proves the limit of $p_0[\vec{C}_n, q]$. Moreover, since any function in $\functions[\vec{C}_n,q]$ has instability at most one, we get the limit of the average instability.

In a similar way, for any $x \in [q]^n$ the number of functions in $\functions_1(\vec{C}_n, q)$ fixing only $x$ is given by
$$
	|\functions(\vec{C}_n, q)| q^{-n} \left\{ 1 - \sum_{t=2}^q \frac{(-1)^t}{(t-1)!} \left[ \left( 1 - \frac{1}{q} \right) \dots \left( 1 - \frac{t-1}{q} \right) \right]^n \right\},
$$
and hence
$$
	p_1(\vec{C}_n, q) = \sum_{s=0}^q \frac{(-1)^s}{s!} \left[ \left( 1 - \frac{1}{q} \right) \dots \left( 1 - \frac{s}{q} \right) \right]^n.
$$
As $q$ tends to infinity, that quantity tends to $V_q$, and hence $p_1[\vec{C}_n,q]$ tends to $e^{-1}$. Finally, we easily obtain the limit of $p_2[\vec{C}_n, q]$. 
\end{proof}

\providecommand{\bysame}{\leavevmode\hbox to3em{\hrulefill}\thinspace}
\providecommand{\MR}{\relax\ifhmode\unskip\space\fi MR }
\providecommand{\MRhref}[2]{%
  \href{http://www.ams.org/mathscinet-getitem?mr=#1}{#2}
}
\providecommand{\href}[2]{#2}

\end{document}